\newtheorem{theorem}{Theorem}[section]
\newtheorem{lemma}[theorem]{Lemma}
\newtheorem{corollary}[theorem]{Corollary}
\newtheorem{definition}[theorem]{Definition}
\newtheorem{question}[theorem]{Question}
\newtheorem{claim}[theorem]{Claim}
\newtheorem{rmks}[theorem]{\normalfont{\em{Remarks}}}
\renewcommand*\env@matrix[1][\arraystretch]{%
\edef\arraystretch{#1}%
\hskip -\arraycolsep
\let\@ifnextchar\new@ifnextchar
\array{*\c@MaxMatrixCols c}}
\date{\today}
\title[Robust quasi-isometric embeddings inapproximable by Anosov representations]{Robust quasi-isometric embeddings inapproximable by Anosov representations
}
\date{\today}
\author{Konstantinos Tsouvalas}
\begin{document}

\frenchspacing

\maketitle

\begin{abstract} Let $\mathbb{K}=\mathbb{R}$ or $\mathbb{C}$. For all but finitely many $m\in \mathbb{N}$, we exhibit the first examples of non-locally rigid, Zariski dense, robust quasi-isometric embeddings of hyperbolic groups in $\mathsf{SL}_m(\mathbb{K})$ which are not limits of Anosov representations. As a consequence, we show that higher rank analogues of Sullivan's structural stabilty theorem and of the density theorem for Kleinian groups fail for Anosov representations in $\mathsf{SL}_m(\mathbb{C}), m\geq 30$. 
\end{abstract}

\section{Introduction}

Anosov representations of Gromov hyperbolic groups in real semisimple Lie groups were introduced by Labourie in his work on the Hitchin component \cite{Labourie} and further generalized by Guichard--Wienhard in \cite{GW}. Anosov representations have been extensively studied in the past two decades and today they are recognized as the correct higher rank analogues of convex cocompact subgroups of rank one Lie groups.

Let $G$ be a real semisimple Lie group and $\Gamma$ a hyperbolic group. A representation $\rho:\Gamma \rightarrow G$ is called {\em robustly discrete faithful} (resp. {\em robustly quasi-isometric embedding}) if every representation, sufficiently close to $\rho$ in the space of representations $\textup{Hom}(\Gamma,G)$, is discrete and faithful (resp. a quasi-isometric embedding). An important property of Anosov representations is that they are {\em structurally stable}, i.e. form an open subset of the space of representations in the target Lie group (see \cite{Labourie}, \cite[Thm. 5.13]{GW}), hence they are robustly quasi-isometric embeddings. Sullivan \cite{Sullivan} established that a non-locally rigid discrete subgroup of $\mathsf{PSL}_2(\mathbb{C})$, with the property that all nearby deformations of its inclusion in $\mathsf{PSL}_2(\mathbb{C})$ remain faithful, then this subgroup is necessarily hyperbolic and convex cocompact. In light of the fact that Anosov representations are stable, it is natural to ask whether an analogue of Sullivan's structural stability theorem holds for Anosov representations in higher rank Lie groups. The following question was asked by R. Potrie:

\begin{question}\label{question} \textup{(Potrie \cite[Q.5]{Potrie-ICM}, \cite[Q.1]{Potrie})} Let $\Gamma$ be a hyperbolic group and $G$ a linear semisimple Lie group of real rank at least $2$.

\noindent \textup{(i)} If a representation $\rho:\Gamma \rightarrow G$ is robustly discrete and faithful, is $\rho$ Anosov with respect to a pair of opposite parabolic subgroups of $G$?\\
\noindent \textup{(ii)} If a representation $\rho:\Gamma \rightarrow G$ is robustly quasi-isometric embedding, is $\rho$ Anosov with respect to a pair of opposite parabolic subgroups of $G$? \end{question}

\noindent Question \ref{question} (i) was also independently asked by F. Kassel in \cite[p. 1165]{Kassel-ICM} with the additional requirement that $\rho$ is non-locally rigid (i.e. $\rho$ is a limit of representations not conjugate to it). A positive answer to Question \ref{question} (ii) was given in \cite{CPL24}, in the case where $G=\mathsf{SL}_3(\mathbb{R})$, $\Gamma$ is a non-elementary free group and $\rho:\Gamma \rightarrow \mathsf{SL}_3(\mathbb{R})$ is a reducible representation. A negative answer to Question \ref{question} (i) was established in \cite[Exmp. 8.5]{DGKLM21} for $G=\mathsf{SL}^{\pm}_5(\mathbb{R})$, by exhibiting an example of a discrete faithful representation $\psi:\Delta \rightarrow \mathsf{SL}^{\pm}_5(\mathbb{R})$, of a hyperbolic Coxeter group $\Delta$, which is not Anosov and all of whose nearby deformations are projective Anosov representations.

Let $\mathbb{K}=\mathbb{R}$ or $\mathbb{C}$. The goal of the present paper is to exhibit the first examples of non-locally rigid, robust quasi-isometric embeddings of hyperbolic groups in $\mathsf{SL}_m(\mathbb{K})$ which are not limits of Anosov representations in $\mathsf{SL}_m(\mathbb{K})$. For $m\in \mathbb{N}$ and $1\leq k \leq m-1$, $\textup{Anosov}_{k}(\Gamma,\mathsf{SL}_m(\mathbb{K}))\subset \textup{Hom}(\Gamma,\mathsf{SL}_m(\mathbb{K}))$ denotes the open subset of $k$-Anosov representations of a hyperbolic group $\Gamma$ in $\mathsf{SL}_m(\mathbb{K})$ (see Definition \ref{Anosov-def}). For $n\geq 2$, let $d(n)\in \mathbb{N}$ be the minimal dimension of an irreducible $1$-proximal representation of the rank one Lie group $\mathsf{Sp}(n,1)$ over $\mathbb{R}$. Our main result is the following.

\begin{theorem}\label{mainthm} Let $n\geq 2$ and $m>\frac{1}{2}d(n)^2$ integers. There exist $\Gamma_1,\Gamma_2,\ldots, \Gamma_{\ell}<\mathsf{Sp}(n,1)$ uniform lattices and  an open subset $\Omega \subset \textup{Hom}(\Gamma_1\ast \cdots \ast \Gamma_{\ell},\mathsf{SL}_{m}(\mathbb{K}))$ with the following properties:

\noindent \textup{(i)} Every representation in $\Omega$ is a quasi-isometric embedding.\\
\noindent \textup{(ii)} $\Omega \cap \textup{Anosov}_k(\Gamma_1\ast \cdots \ast \Gamma_{\ell},\mathsf{SL}_{m}(\mathbb{K}))$ is empty for every $k=1, \ldots, m-1$.\\
\noindent \textup{(iii)} Every representation in $\Omega$ is non-locally rigid. In addition, $\Omega$ contains a dense subset of Zariski dense representations of $\Gamma_1\ast \cdots \ast \Gamma_{\ell}$ in $\mathsf{SL}_{m}(\mathbb{K})$.\end{theorem}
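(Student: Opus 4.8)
\emph{Proof strategy.} I will build the target representations from a single $1$-proximal irreducible representation of $\mathsf{Sp}(n,1)$, together with carefully chosen ``filler'' summands, and then take a free product via ping-pong. Fix $n\ge 2$, put $d=d(n)$, and let $\tau\colon\mathsf{Sp}(n,1)\to\mathsf{SL}_d(\mathbb{R})\hookrightarrow\mathsf{SL}_d(\mathbb{K})$ be a $1$-proximal irreducible representation. Since $\mathsf{Sp}(n,1)$ has real rank one and every uniform lattice $\Gamma<\mathsf{Sp}(n,1)$ is convex cocompact, $\tau|_\Gamma$ is projective Anosov; more generally, for any representation $\eta$ of $\mathsf{Sp}(n,1)$ of dimension $m-d$, the restriction $(\tau\oplus\eta)|_\Gamma\colon\Gamma\to\mathsf{SL}_m(\mathbb{K})$ is a quasi-isometric embedding, being the convex cocompact embedding $\Gamma\hookrightarrow\mathsf{Sp}(n,1)$ followed by the proper totally geodesic embedding of symmetric spaces induced by $\tau\oplus\eta$. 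Listing the restricted weights of $\tau\oplus\eta$ (single-variable, with multiplicity) in decreasing order, the set $A(\tau\oplus\eta)$ of positions at which this list strictly drops is exactly the set of $k$ for which $(\tau\oplus\eta)|_\Gamma$ is $k$-Anosov; it does not depend on $\Gamma$, and since $\tau$ is nontrivial it is nonempty.

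The core of the proof is representation-theoretic. Choosing $\ell$ suitably large, I must exhibit representations $\eta_1,\dots,\eta_\ell$ of $\mathsf{Sp}(n,1)$, each of dimension $m-d$, for which the Anosov-position sets $A_i:=A(\tau\oplus\eta_i)$ satisfy
\[
A_1\cap A_2\cap\cdots\cap A_\ell=\varnothing .
\]
Since no individual $A_i$ can be empty, this forces the factors to have ``complementary'' gap patterns, and realising such patterns inside $\mathsf{SL}_m(\mathbb{K})$ is constrained by the scarcity of small representations of $\mathsf{Sp}(n,1)$: the quaternionic structure makes their restricted weights have large multiplicities and few distinct values, so the fillers $\eta_i$ must involve representations roughly as large as a summand of $\Lambda^2\tau$ or $\mathrm{Sym}^2\tau$, and the exact amount of room this requires is the hypothesis $m>\tfrac12 d(n)^2$. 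Granting this, fix uniform lattices $\Gamma_1,\dots,\Gamma_\ell<\mathsf{Sp}(n,1)$ (hyperbolic groups, so $\Gamma_1\ast\cdots\ast\Gamma_\ell$ is hyperbolic), conjugate $(\tau\oplus\eta_i)|_{\Gamma_i}$ by a generic $h_i\in\mathsf{SL}_m(\mathbb{K})$ so that the images $h_i(\tau\oplus\eta_i)(\Gamma_i)h_i^{-1}$ are in ping-pong position and generate $\Gamma_1\ast\cdots\ast\Gamma_\ell$, and let $\Omega$ consist of those representations $\rho$ of $\Gamma_1\ast\cdots\ast\Gamma_\ell$ for which each $\rho|_{\Gamma_i}$ is conjugate to $(\tau\oplus\eta_i)|_{\Gamma_i}$ and the $\rho(\Gamma_i)$ are in ping-pong position. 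Openness of $\Omega$ uses the strong rigidity of quaternionic hyperbolic lattices: because $H^1(\Gamma_i,V)=0$ for every finite-dimensional representation $V$ factoring through $\mathsf{Sp}(n,1)$ (local rigidity of uniform lattices in $\mathsf{Sp}(n,1)$, $n\ge2$), the conjugacy orbit of $(\tau\oplus\eta_i)|_{\Gamma_i}$ is open in $\mathrm{Hom}(\Gamma_i,\mathsf{SL}_m(\mathbb{K}))$ and $\mathrm{Hom}(\Gamma_i,\mathsf{SL}_m(\mathbb{K}))$ is smooth there; intersecting the open preimages of these orbits under the restriction maps with the open ping-pong condition shows $\Omega$ is open and, by construction, nonempty.

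\emph{Verification of \textup{(i)}--\textup{(iii)}.} \textup{(i)} For $\rho\in\Omega$ each $\rho|_{\Gamma_i}$ is a quasi-isometric embedding of $\Gamma_i$ that is Anosov with respect to the parabolic $P_{A_i}$, so it carries continuous equivariant boundary maps; a ping-pong combination argument for such pieces in transverse position then shows that the orbit map of $\rho$ into the symmetric space of $\mathsf{SL}_m(\mathbb{K})$ is a quasi-isometric embedding of $\Gamma_1\ast\cdots\ast\Gamma_\ell$, by checking along a reduced word that the syllable contributions to the Cartan projection do not cancel (using transversality of the limit configurations of distinct factors). \textup{(ii)} If some $\rho\in\Omega$ were $k$-Anosov, then since $\Gamma_i$ is an undistorted subgroup of $\Gamma_1\ast\cdots\ast\Gamma_\ell$ the restriction $\rho|_{\Gamma_i}$ would be $k$-Anosov, forcing $k\in A_i$ for every $i$, contrary to $\bigcap_i A_i=\varnothing$; as $\Omega$ is moreover open, no $\rho\in\Omega$ is a limit of Anosov representations, which is the inapproximability in the title. \textup{(iii)} Since $\mathrm{Hom}(\Gamma_i,\mathsf{SL}_m(\mathbb{K}))$ is smooth at $(\tau\oplus\eta_i)|_{\Gamma_i}$ of dimension $\dim\mathsf{SL}_m(\mathbb{K})-\dim\mathfrak{z}_i$, where $\mathfrak{z}_i$ is the centraliser in $\mathfrak{sl}_m(\mathbb{K})$ of the semisimple image $(\tau\oplus\eta_i)(\mathsf{Sp}(n,1))$ (which by Borel density coincides with the centraliser of $\rho(\Gamma_i)$), the product $\mathrm{Hom}(\Gamma_1\ast\cdots\ast\Gamma_\ell,\mathsf{SL}_m(\mathbb{K}))=\prod_i\mathrm{Hom}(\Gamma_i,\mathsf{SL}_m(\mathbb{K}))$ is smooth at every $\rho\in\Omega$ of dimension $\sum_i(\dim\mathsf{SL}_m(\mathbb{K})-\dim\mathfrak{z}_i)$, which strictly exceeds $\dim\mathsf{SL}_m(\mathbb{K})\ge\dim(\mathsf{SL}_m(\mathbb{K})\cdot\rho)$ because $\ell\ge2$ and the $\mathfrak{z}_i$ are small; hence every $\rho\in\Omega$ is non-locally rigid. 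For generic conjugating parameters the subalgebras $\mathrm{Ad}(h_i)\bigl(\mathrm{Lie}\,(\tau\oplus\eta_i)(\mathsf{Sp}(n,1))\bigr)$ span $\mathfrak{sl}_m(\mathbb{K})$ once $\ell$ is large enough, so Zariski density is a nonempty Zariski-open, hence analytically dense, condition on $\Omega$.

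\emph{Main obstacle.} Everything above except the second paragraph is either soft (openness and non-rigidity from $H^1$-vanishing, Zariski density from genericity) or standard (projective Anosov combination theorems, convex cocompactness and strong local rigidity of uniform lattices in $\mathsf{Sp}(n,1)$). The hard and genuinely new part is the construction in the second paragraph: producing, inside $\mathsf{SL}_m(\mathbb{K})$ with $m$ only just above $\tfrac12 d(n)^2$, a finite family $\tau\oplus\eta_i$ of representations of $\mathsf{Sp}(n,1)$ whose weight-gap patterns have empty common intersection, and then checking that the resulting free product still admits the ping-pong configuration needed for the quasi-isometric embedding property in \textup{(i)}. Identifying $\tfrac12 d(n)^2$ as the exact dimension threshold — read off from the multiplicities and number of distinct restricted weights of the minimal $1$-proximal representation of $\mathsf{Sp}(n,1)$ — is the technical crux.
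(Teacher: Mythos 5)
Your high-level outline (ping-pong of locally rigid Anosov pieces with mismatched Anosov index sets, a dimension/genericity argument for non-rigidity and Zariski density) matches the paper's skeleton, but the two steps you flag as "hard" are precisely where you go astray or leave gaps. Concerning the construction of factor representations with disjoint Anosov-index sets, which you leave entirely open: you speculate that the fillers $\eta_i$ must involve summands roughly as large as $\Lambda^2\tau$ or $\mathrm{Sym}^2\tau$, but the paper's mechanism is simpler and quite different. If $\tau|_\Gamma$ is $\{k_1,\ldots,k_s\}$-Anosov in $\mathsf{SL}_d(\mathbb{K})$, then the $p$-fold block-diagonal $\gamma\mapsto\mathrm{diag}(I_r,\tau(\gamma),\ldots,\tau(\gamma))$ is $\{pk_1,\ldots,pk_s\}$-Anosov in $\mathsf{SL}_{pd+r}(\mathbb{K})$, whereas $\gamma\mapsto\mathrm{diag}(I_{(p-1)d+r},\tau(\gamma))$ remains $\{k_1,\ldots,k_s\}$-Anosov. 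Choosing an integer $2\le p\le d-1$ with $pk_i\ne k_j$ for all $i,j$ (e.g.\ $p>\max k_i$) makes the two index sets disjoint, and only \emph{two} types of factor are used: one copy of the $p$-fold embedding (for $\Gamma_1$) and $\ell-1$ conjugated copies of the one-fold embedding, with $\ell$ taken large solely to make the Zariski-density argument work. Since $\max k_i$ may be as large as $\lfloor d/2\rfloor$, this forces $m=pd+r>d^2/2$, which is where the threshold $\tfrac12 d(n)^2$ actually comes from; it has nothing to do with tensor or exterior powers.

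Concerning the ping-pong combination: you classify ``projective Anosov combination theorems'' among the soft, standard inputs, but that is exactly what cannot be invoked here. The two factor types are chosen so that their Anosov-index sets are disjoint, hence there is no common $k$ for which both are $k$-Anosov, and the paper explicitly remarks that its combination result (Theorem \ref{stable1}) does \emph{not} follow from any known free-product combination theorem for $k$-Anosov groups. It instead proves a bespoke ping-pong theorem whose input is a flag transverse to the limit set of $\rho_0(\Gamma)$ in $\mathcal{F}_{1,d-1}(\mathbb{K})$ (obtained from the non-cocompactness of the properly convex action, via Benoist), and it separately establishes (Lemma \ref{embedding-1}) that the conjugating elements realising nearby representations of each factor can be taken uniformly close to the identity, which is delicate precisely because the block-diagonal pieces have large centralisers coming from the identity blocks. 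Without that quantitative control, asserting that the orbit-openness from $H^1$-vanishing plus an ``open ping-pong condition'' yields openness of $\Omega$ leaves a real gap: you must show the ping-pong sets survive the conjugation needed to normalise each restriction, not merely that a nearby ping-pong configuration exists for \emph{some} conjugates.
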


Theorem \ref{mainthm} settles a negative answer to Question \ref{question} (i) and (ii) and to the question in \cite[p. 1165]{Kassel-ICM}, for $G=\mathsf{SL}_m(\mathbb{K})$ and all but finitely many $m \in \mathbb{N}$. The examples of Theorem \ref{mainthm} are the first known examples of discrete faithful representations of hyperbolic groups in higher rank complex special linear groups which fail to be limits of Anosov representations. As a consequence, our result shows that higher rank analogues of Sullivan's structural stability theorem \cite{Sullivan} and of the density theorem of Kleinian groups, established by a series of papers \cite{BrBr, BCM, NS, Ohshika}, fail for Anosov representations in $\mathsf{SL}_m(\mathbb{C})$ and $m\geq 30$ (see Corollary \ref{cor-1}).
\par The free product $\Gamma_1\ast \cdots \ast \Gamma_{\ell}$ admits Anosov representations in $\mathsf{SL}_m(\mathbb{K})$, thanks to known combination theorems for free products of $1$-Anosov groups \cite{DGK2, DK, DKL} (see also \cite[Thm. 1.3]{DoubaT}). Nevertheless, all such representations avoid sufficiently small neighbourhoods of our examples in the space of representations. Crucial for the proof of Theorem \ref{mainthm} is the fact that complex representations of the uniform lattices $\Gamma_1,\ldots, \Gamma_{\ell}<\mathsf{Sp}(n,1)$, $n\geq 2$, are locally rigid by Corlette's Archimedean superrigidity \cite{Corlette}. Every representation \hbox{$\rho:\Gamma_1 \ast \cdots \ast \Gamma_{\ell}\rightarrow \mathsf{SL}_{m}(\mathbb{K})$} in the open set $\Omega$ is constructed as a ping-pong combination of Anosov representations $\rho_1:\Gamma_1\rightarrow \mathsf{SL}_{m}(\mathbb{K})$ and \hbox{$\rho_i:\Gamma_i\rightarrow \mathsf{SL}_{m}(\mathbb{K})$}, $i=2,\ldots, \ell$, such that the set $I_{\rho,1}=\{k\in \mathbb{N}: \rho_1 \ \textup{is}\  k\textup{-Anosov}\}$ is disjoint from $I_{\rho,i}=\{k\in \mathbb{N}: \rho_{i} \ \textup{is}\  k\textup{-Anosov}\}$. This key property, combined with the local rigidity of $\rho_1$ and $\rho_i$, guarantees that $\rho$ is not a limit of Anosov representations of $\Gamma_1 \ast \cdots \ast \Gamma_{\ell}$ in $\mathsf{SL}_m(\mathbb{K})$. Finally, we note that every representation $\rho \in \Omega$ is non-locally rigid and also a limit of Zariski dense representations, since it is possible to conjugate the free factors, with generic elements of $\mathsf{GL}_{m}(\mathbb{K})$, to produce non-trivial deformations arbitrarily close to $\rho$.
\medskip

\noindent {\bf Acknowledgements.} I am grateful to Le\'on Carvajales for motivating discussions on Anosov representations. I would also like to thank Rafael Potrie and Fanny Kassel for their questions and Richard Canary and Anna Wienhard for their comments. 

\section{Background}\label{background}
Let $\mathbb{K}=\mathbb{R}$ or $\mathbb{C}$. For $d \geqslant 2$, we equip $\mathbb{K}^d$ with the standard inner product $\langle \cdot, \cdot\rangle$ and its canonical basis $(e_1,\ldots,e_d)$. Let also $\mathsf{Mat}_{d\times d}(\mathbb{K})$ be the algebra of $d\times d$ matrices with entries in $\mathbb{K}$, equipped with the standard operator norm $||\cdot||$  defined as $||A||:=\sup\{||Av||: ||v||=1\}$.

For a matrix $g\in \mathsf{GL}_d(\mathbb{K})$, $\sigma_1(g)\geq \cdots \geq \sigma_d(g)$ (resp. $\lambda_1(g)\geq \cdots \geq \lambda_d(g)$) are the singular values (resp. moduli of eigenvalues) of $g$ in non-increasing order. Denote by $\mathsf{K}_d$ the maximal compact subgroup of $\mathsf{GL}_d(\mathbb{K})$, where $\mathsf{K}_d:=\mathsf{O}(d)$ if $\mathbb{K}=\mathbb{R}$ and $\mathsf{K}_d:=\mathsf{U}(d)$ if $\mathbb{K}=\mathbb{C}$. The standard Cartan decomposition we consider is $$\mathsf{GL}_d(\mathbb{K})=\mathsf{K}_d\exp(\mathfrak{a}^{+})\mathsf{K}_d,$$ where $\mathfrak{a}^{+}=\big\{\textup{diag}(a_1,\ldots,a_d):a_1\geq a_2\geq \cdots \geq a_d\big\}$ and every $g\in \mathsf{GL}_d(\mathbb{K})$ decomposes as \begin{align}\label{decomp} g=k_g\textup{diag}\big(\sigma_1(g),\ldots,\sigma_d(g)\big)k_g', \ \ k_g,k_g'\in \mathsf{K}_d.\end{align} Given $1\leq i\leq d-1$, $g\in \mathsf{GL}_d(\mathbb{K})$ is called {\em $i$-proximal} if $\lambda_i(g)>\lambda_{i+1}(g)$. A subgroup $\mathsf{G}<\mathsf{GL}_d(\mathbb{K})$ (resp. a linear representation) is called $i$-proximal if it (resp. its image) contains an $i$-proximal matrix. We say that $g$ has a {\em gap of index $i$} if $\sigma_i(g)>\sigma_{i+1}(g)$. In this case, writing $g$ as in (\ref{decomp}), the $i$-plane $$\Xi_{i}(g):=k_{g}\langle e_1,\ldots, e_i\rangle$$ is well-defined and does not depend on the choice of $k_g,k_g'\in \mathsf{K}_d$.

We equip the projective space $\mathbb{P}(\mathbb{K}^d)$ with the angle metric $d_{\mathbb{P}}$ given by the formula $$d_{\mathbb{P}}([v_1],[v_2])=\sqrt{1-|\langle v_1,v_2\rangle|^2}, \ \ ||v_1||=||v_2||=1.$$ For a $(d-1)$-hyperplane $V=w\langle e_1,\ldots, e_{d-1}\rangle$, $w\in \mathsf{K}_d$, and a line $[v]\in \mathbb{P}(\mathbb{K}^d)$, their distance is $$\textup{dist}_{\mathbb{P}(\mathbb{K}^d)}([v],\mathbb{P}(V))=\inf_{x\in \mathbb{P}(V)}d_{\mathbb{P}}([v],x)=\frac{1}{||v||}|\langle v,w e_d\rangle |.$$

We will need the following standard estimate.

\begin{lemma}\label{bound-hyperplane} Let $g \in \mathsf{GL}_d(\mathbb{K})$ be a matrix with a gap of index $1$. For $x\in \mathbb{P}(\mathbb{K}^d)\smallsetminus \mathbb{P}(\Xi_{d-1}(g^{-1}))$ the following estimate holds: $$d_{\mathbb{P}}\big(gx, \Xi_1(g)\big)\leq \frac{\sigma_2(g)}{\sigma_1(g)} \textup{dist}_{\mathbb{P}(\mathbb{K}^d)}\big(x,\Xi_{d-1}(g^{-1})\big)^{-1}.$$\end{lemma}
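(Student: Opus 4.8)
The plan is to reduce to the case where $g$ is diagonal by exploiting the bi-$\mathsf{K}_d$-invariance of all the objects involved, and then to make the estimate by a direct computation. Write the Cartan decomposition $g = k_g\,\mathrm{diag}(\sigma_1(g),\ldots,\sigma_d(g))\,k_g'$ with $k_g,k_g'\in \mathsf{K}_d$, so that $\Xi_1(g) = k_g\langle e_1\rangle$. For the contracted subspace of $g^{-1}$, note $g^{-1} = (k_g')^{-1}\mathrm{diag}(\sigma_1(g)^{-1},\ldots,\sigma_d(g)^{-1})k_g^{-1}$; conjugating the diagonal part by the order-reversing permutation matrix $w_0\in\mathsf{K}_d$ puts it into non-increasing order, which shows $\Xi_{d-1}(g^{-1}) = (k_g')^{-1}w_0\langle e_1,\ldots,e_{d-1}\rangle = (k_g')^{-1}\langle e_2,\ldots,e_d\rangle$, i.e. the hyperplane orthogonal to the unit vector $(k_g')^{-1}e_1$. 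Thus, by the displayed formula for the distance to a hyperplane, for $x=[v]$ with $\|v\|=1$ we have $\mathrm{dist}_{\mathbb{P}(\mathbb{K}^d)}(x,\Xi_{d-1}(g^{-1})) = |\langle v,(k_g')^{-1}e_1\rangle| = |\langle k_g'v,e_1\rangle|$, which is nonzero precisely because $x\notin\mathbb{P}(\Xi_{d-1}(g^{-1}))$.

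Next I would replace $v$ by the unit vector $u := k_g'v = (u_1,\ldots,u_d)$. Since $d_{\mathbb{P}}$ is $\mathsf{K}_d$-invariant, $d_{\mathbb{P}}(gx,\Xi_1(g)) = d_{\mathbb{P}}(k_g^{-1}gx, k_g^{-1}\Xi_1(g)) = d_{\mathbb{P}}([au],[e_1])$, where $a = \mathrm{diag}(\sigma_1(g),\ldots,\sigma_d(g))$ and $au = (\sigma_1(g)u_1,\ldots,\sigma_d(g)u_d)$. Using the definition of the angle metric,
\[
d_{\mathbb{P}}([au],[e_1])^2 \;=\; 1 - \frac{\sigma_1(g)^2|u_1|^2}{\sum_{i=1}^d \sigma_i(g)^2|u_i|^2} \;=\; \frac{\sum_{i=2}^d \sigma_i(g)^2|u_i|^2}{\sum_{i=1}^d \sigma_i(g)^2|u_i|^2}.
\]
I would then bound the numerator by $\sigma_2(g)^2\sum_{i\geq 2}|u_i|^2 \leq \sigma_2(g)^2$ (using $\|u\|=1$) and the denominator below by $\sigma_1(g)^2|u_1|^2$, which gives
\[
d_{\mathbb{P}}(gx,\Xi_1(g))^2 \;\leq\; \frac{\sigma_2(g)^2}{\sigma_1(g)^2|u_1|^2}.
\]
Taking square roots and recalling $|u_1| = |\langle k_g'v,e_1\rangle| = \mathrm{dist}_{\mathbb{P}(\mathbb{K}^d)}(x,\Xi_{d-1}(g^{-1}))$ yields the claimed inequality.

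There is no substantive obstacle here: the estimate is elementary once the reduction is in place. The only point that requires a little care is the bookkeeping for the Cartan decomposition of $g^{-1}$ — in particular correctly identifying $\Xi_{d-1}(g^{-1})$ via the order-reversing permutation — and making sure the $\mathsf{K}_d$-invariance is applied consistently to both $d_{\mathbb{P}}$ and the hyperplane-distance formula so that the diagonal reduction is legitimate.
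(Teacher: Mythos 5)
Your proof is correct and follows essentially the same route as the paper's: both expand $d_{\mathbb{P}}([gv],\Xi_1(g))^2$ after the Cartan decomposition, bound the numerator by $\sigma_2(g)^2$ using $\|u\|=1$ and the denominator from below by $\sigma_1(g)^2|u_1|^2$, and identify $|u_1|$ with the distance to $\Xi_{d-1}(g^{-1})$. The only cosmetic difference is that you spell out the $\mathsf{K}_d$-invariance reduction and the order-reversing permutation for $g^{-1}$, while the paper handles these steps implicitly.
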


\begin{proof} Let us write $g=k_g\textup{diag}(\sigma_1(g),\ldots,\sigma_d(g))k_g'$, $k_g,k_g'\in \mathsf{K}_d$, where $\Xi_1(g)=[k_ge_1]$ and $\Xi_{d-1}(g^{-1})=(k_{g}')^{-1}\langle e_2,\ldots,e_d\rangle$. If we write $x=[v]$ for some unit vector $v\in \mathbb{K}^d$, we have $\textup{dist}_{\mathbb{P}(\mathbb{K}^d)}([v],\Xi_{d-1}(g^{-1}))=|\langle k_g'v,e_1\rangle|$ and by the definition of the metric $d_{\mathbb{P}}$, \begin{align*}d_{\mathbb{P}}\big([gv],\Xi_1(g) \big)^2= \frac{\sum_{i=2}^{d}\sigma_i(g)^2 |\langle k_g'v,e_i\rangle|^2}{\sum_{i=1}^{d}\sigma_i(g)^2|\langle k_g'v,e_i\rangle|^2}&\leq \frac{\sigma_2(g)^2}{\sigma_1(g)^2}\frac{1}{|\langle k_g'v,e_1\rangle|^{2}}.\end{align*} This finishes the proof of the lemma.\end{proof}

We will also use the following folklore estimate.

\begin{lemma}\label{close-I} For every $g \in \mathsf{GL}_d(\mathbb{K})$ we have $$\sup_{x\in \mathbb{P}(\mathbb{K}^d)}d_{\mathbb{P}}(gx, x)\leq 2\sigma_1(g^{-1})\big|\big|g-I_d\big|\big|.$$\end{lemma}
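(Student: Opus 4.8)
The plan is to reduce this projective estimate to a Euclidean estimate on the unit sphere. First I would record the elementary comparison
$$d_{\mathbb{P}}([v_1],[v_2]) \leq \|v_1 - v_2\|$$
valid for all unit vectors $v_1, v_2 \in \mathbb{K}^d$: this follows from
$$1 - |\langle v_1, v_2\rangle|^2 \leq 2\bigl(1 - |\langle v_1,v_2\rangle|\bigr) \leq 2\bigl(1 - \textup{Re}\langle v_1, v_2\rangle\bigr) = \|v_1 - v_2\|^2,$$
which holds over $\mathbb{R}$ and $\mathbb{C}$ alike.

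Next, I would fix a point $x = [v] \in \mathbb{P}(\mathbb{K}^d)$ with $\|v\| = 1$, so that $gx = \bigl[gv/\|gv\|\bigr]$, and apply the comparison to the unit vectors $v$ and $gv/\|gv\|$ to get
$$d_{\mathbb{P}}(gx, x) \leq \left\| \frac{gv}{\|gv\|} - v \right\| = \frac{1}{\|gv\|}\,\bigl\| gv - \|gv\|\, v \bigr\|.$$
I would then bound the numerator by the triangle inequality, separating the perturbation from the normalization:
$$\bigl\| gv - \|gv\|\, v \bigr\| \leq \|gv - v\| + \bigl|\,1 - \|gv\|\,\bigr| \leq 2\,\|(g - I_d)v\| \leq 2\,\|g - I_d\|,$$
where the middle step uses $\bigl|\,1 - \|gv\|\,\bigr| = \bigl|\,\|v\| - \|gv\|\,\bigr| \leq \|gv - v\|$ and the last step uses $\|v\|=1$. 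For the denominator I would use that $g$ cannot contract any vector by more than its smallest singular value, i.e. $\|gv\| \geq \sigma_d(g)\|v\| = \sigma_d(g) = \sigma_1(g^{-1})^{-1}$.

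Combining the three bounds gives $d_{\mathbb{P}}(gx, x) \leq 2\,\sigma_1(g^{-1})\,\|g - I_d\|$ for every $x$, and taking the supremum over $x \in \mathbb{P}(\mathbb{K}^d)$ finishes the proof. There is no genuine obstacle here; the only point that requires a little care is tracking the constant $2$, which arises precisely from splitting $\|gv - \|gv\|v\|$ into the perturbation term $\|gv-v\|$ and the normalization term $\bigl|1-\|gv\|\bigr|$, each of which is itself controlled by $\|gv - v\|$.
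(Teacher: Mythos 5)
Your proof is correct and is essentially the same as the paper's: the paper also bounds $d_{\mathbb{P}}([gv],[v])$ by $\bigl\|\frac{gv}{\|gv\|}-v\bigr\|$, then uses the triangle inequality to get $\frac{2\|g-I_d\|}{\|gv\|}$, and finishes with $\|gv\|\geq \sigma_1(g^{-1})^{-1}$; you have merely spelled out the comparison $d_{\mathbb{P}}\leq\|\cdot-\cdot\|$ and the split of the numerator into the perturbation and normalization terms, which the paper leaves implicit.
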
 

\begin{proof} For $v\in \mathbb{K}^d$, $||v||=1$, observe that $d_{\mathbb{P}}([gv],[v])\leq \big|\big|\frac{gv}{||gv||}-v\big|\big|$. The triangle inequality shows $\big|\big|\frac{gv}{||gv||}-v\big|\big|\leq \frac{2||g-I_d||}{||gv||}$ and the estimate follows. \end{proof}
\subsection{Quasi-isometric embeddings and Anosov representations.} \label{definition} Let $\Gamma$ be a finitely generated group. Fix a left invariant metric $d_{\Gamma}:\Gamma \times \Gamma \rightarrow \mathbb{R}_{+}$, induced by a finite generating subset of $\Gamma$, and denote by $|\cdot|_{\Gamma}:\Gamma \rightarrow \mathbb{R}_{+}$, $|\gamma|_{\Gamma}=d_{\Gamma}(\gamma,1)$, $\gamma \in \Gamma$, the word length function of $d_{\Gamma}$. The group $\Gamma$ is called {\em hyperbolic} if its Cayley graph, equipped with $d_\Gamma$, is a Gromov hyperbolic space. We refer to \cite{Gromov} for more background on hyperbolic groups.

A linear representation $\rho:\Gamma \rightarrow \mathsf{GL}_d(\mathbb{K})$ is called a {\em quasi-isometric embedding} if the the orbit map of $\rho$, $\tau_{\rho}:\Gamma \rightarrow \mathsf{GL}_d(\mathbb{K})/\mathsf{K}_d$, $\tau_{\rho}(\gamma)=\rho(\gamma)\mathsf{K}_d$, is a quasi-isometric embedding, where the target symmetric space $\mathsf{GL}_d(\mathbb{K})/\mathsf{K}_d$ is equipped with the Killing metric. Equivalently, $\rho$ is a quasi-isometric embedding if and only if there exist $R, \kappa>0$ such that $$\frac{\sigma_1(\rho(\gamma))}{\sigma_d(\rho(\gamma))}\geq  e^{\kappa|\gamma|_{\Gamma}-R} \ \  \forall \gamma \in \Gamma.$$

 Anosov representations, were introduced in \cite{Labourie}, and have been extensively studied since then, see for example the papers \cite{GW, KLP2, GGKW, BPS, DGK, Zimmer}. More recently, it was established that cubulated hyperbolic groups admit Anosov representations in higher rank \cite{DFWZ}.

We shall provide the following definition of Anosov representations, equivalent to Labourie's original dynamical definition, established by Kapovich--Leeb--Porti \cite{KLP2} and independently by Bochi--Potrie--Sambarino \cite{BPS}.

\begin{definition}\label{Anosov-def}\textup{(\cite{BPS, KLP2})} Let $d,k\in \mathbb{N}_{\geq 2}$, $1\leq k \leq d-1$ and $\Gamma$ a finitely generated group. A representation $\rho:\Gamma \rightarrow \mathsf{GL}_d(\mathbb{K})$  is {\em $k$-Anosov} and $\Gamma$ is hyperbolic if and only if there exist $C,\alpha>0$ such that $$\frac{\sigma_{k}(\rho(\gamma))}{\sigma_{k+1}(\rho(\gamma))} \geqslant e^{\alpha |\gamma|_{\Gamma}-C} \ \ \forall \gamma \in \Gamma.$$ \end{definition} 

In contrast to rank one Lie groups, for a representation in $\mathsf{GL}_d(\mathbb{K})$, $d\geq 3$, the property of being Anosov (for some $k$) is much stronger than being a quasi-isometric embedding (e.g. see the example in \cite[Prop. A1]{GGKW}). There are also previously known examples of quasi-isometric embeddings of hyperbolic groups in $\mathsf{SL}_{d}(\mathbb{R})$, $d\geq 4$, which fail to be \hbox{limits of Anosov representations \cite{Tso-limit}.} Examples of Zariski dense representations of free groups in $\mathsf{SL}_3(\mathbb{R})$, which are inapproximable by Anosov representations and are quasi-isometric embeddings (but not robust quasi-isometric embeddings), were exhibited in \cite[Thm. 1.2]{CPL24}. 

 From now on, $\Gamma$ is assumed to be hyperbolic group and $\partial_{\infty}\Gamma$ is its Gromov boundary. Every $k$-Anosov representation $\rho:\Gamma \rightarrow \mathsf{GL}_d(\mathbb{K})$ admits a unique pair of continuous $\rho$-equivariant maps $$(\xi_{\rho}^{k},\xi_{\rho}^{d-k}): \partial_{\infty}\Gamma \rightarrow \mathsf{Gr}_{k}(\mathbb{K}^d)\times \mathsf{Gr}_{d-k}(\mathbb{K}^d)$$ called the {\em Anosov limit maps of $\rho$}. The limit maps of a $k$-Anosov representation can be recovered from the planes $\Xi_k$ and $\Xi_{d-k}$, i.e they satisfy the so called {\em Cartan property}. More precisely, for every infinite sequence $(\gamma_n)_{n\in \mathbb{N}}\subset \Gamma$ converging in $\partial_{\infty}\Gamma$, $$\xi_{\rho}^{i}(\lim_{n\rightarrow \infty}\gamma_n)=\lim_{n \rightarrow \infty} \Xi_{i}(\rho(\gamma_n)) \ \ i\in \{k,d-k\},$$ see \cite[Thm. 5.3]{GGKW}. For more  background on Anosov representations \hbox{we refer to \cite{Canary}.}
\medskip

For $C\subset \mathbb{P}(\mathbb{K}^d)$ denote by $\mathcal{N}_{\theta}(C):=\bigcup_{x\in C}\{y:d_{\mathbb{P}}(x,y)<\theta\}$ the $\theta$-neighbourhood of $C$ with respect to $d_{\mathbb{P}}$. The following lemma is an immediate consequence of the Cartan property of the Anosov limit maps.

\begin{lemma}\label{close} Let  $\rho:\Gamma \rightarrow \mathsf{SL}_m(\mathbb{K})$ be a faithful $1$-Anosov representation. For every $0<\theta<1$ there exists a finite subset $F\subset \Gamma$ such that for every $\gamma\in \Gamma\smallsetminus F$: $$\textup{dist}_{\mathbb{P}(\mathbb{K}^d)}\big(\Xi_1(\rho(\gamma)), \xi_{\rho}^1(\partial_{\infty}\Gamma)\big)\leq \theta,\  \mathbb{P}(\Xi_{d-1}(\rho(\gamma)))\subset \mathcal{N}_{\theta}\Big(\bigcup_{\eta\in \partial_{\infty}\Gamma}\mathbb{P}(\xi_{\rho}^{d-1}(\eta))\Big).$$\end{lemma}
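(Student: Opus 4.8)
The plan is to argue by contradiction, using the compactness of $\bar\Gamma=\Gamma\cup\partial_\infty\Gamma$ together with the Cartan property of the Anosov limit maps $\xi_\rho^1$ and $\xi_\rho^{d-1}$ recorded above. If $\Gamma$ is finite there is nothing to prove (take $F=\Gamma$), so assume $\Gamma$ is infinite. A preliminary observation I would make is that a $1$-Anosov representation is automatically $(d-1)$-Anosov: applying Definition \ref{Anosov-def} to $\gamma^{-1}$ and using the identities $\sigma_i(g^{-1})=\sigma_{d+1-i}(g)^{-1}$ and $|\gamma^{-1}|_\Gamma=|\gamma|_\Gamma$ yields $\sigma_{d-1}(\rho(\gamma))/\sigma_d(\rho(\gamma))\geq e^{\alpha|\gamma|_\Gamma-C}$ for all $\gamma$. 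In particular there is a finite set $F_0\subset\Gamma$ such that for $\gamma\in\Gamma\smallsetminus F_0$ the matrix $\rho(\gamma)$ has gaps of index $1$ and $d-1$, so that both $\Xi_1(\rho(\gamma))$ and $\Xi_{d-1}(\rho(\gamma))$ are well defined.

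Now fix $0<\theta<1$ and suppose for contradiction that no finite $F$ works. Then for each $k\in\mathbb{N}$ there is $\gamma_k\in\Gamma\smallsetminus F_0$ with $|\gamma_k|_\Gamma>k$ failing at least one of the two displayed inequalities. By the pigeonhole principle, infinitely many of the $\gamma_k$ fail the same one of the two; passing to this subsequence we obtain pairwise distinct $(\gamma_n)_{n\in\mathbb{N}}$ with $|\gamma_n|_\Gamma\to\infty$ violating, say, the first inequality for all $n$ (the other case being handled identically below). Since $\Gamma$ is hyperbolic, $\bar\Gamma=\Gamma\cup\partial_\infty\Gamma$ is compact, so after extracting a further subsequence we may assume $\gamma_n\to\eta$ in $\partial_\infty\Gamma$.

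By the Cartan property of the Anosov limit maps (\cite[Thm. 5.3]{GGKW}), $\Xi_1(\rho(\gamma_n))\to\xi_\rho^1(\eta)$ in $\mathbb{P}(\mathbb{K}^d)$ and $\Xi_{d-1}(\rho(\gamma_n))\to\xi_\rho^{d-1}(\eta)$ in $\mathsf{Gr}_{d-1}(\mathbb{K}^d)$. In the first case, since $\xi_\rho^1(\eta)\in\xi_\rho^1(\partial_\infty\Gamma)$, we get $\textup{dist}_{\mathbb{P}(\mathbb{K}^d)}(\Xi_1(\rho(\gamma_n)),\xi_\rho^1(\partial_\infty\Gamma))\to 0$, contradicting that this quantity exceeds $\theta>0$ for all $n$. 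In the second case, using that $V\mapsto\mathbb{P}(V)$ is continuous from $\mathsf{Gr}_{d-1}(\mathbb{K}^d)$ into the space of closed subsets of $\mathbb{P}(\mathbb{K}^d)$ equipped with the Hausdorff metric induced by $d_\mathbb{P}$, we get $\mathbb{P}(\Xi_{d-1}(\rho(\gamma_n)))\subset\mathcal{N}_\theta(\mathbb{P}(\xi_\rho^{d-1}(\eta)))\subset\mathcal{N}_\theta\big(\bigcup_{\eta'\in\partial_\infty\Gamma}\mathbb{P}(\xi_\rho^{d-1}(\eta'))\big)$ for all large $n$, again a contradiction. No genuine obstacle is expected here: the only points requiring a (small) argument are the reduction guaranteeing that $\Xi_1$ and $\Xi_{d-1}$ are defined along the sequence — i.e. the passage from $1$-Anosov to $(d-1)$-Anosov — and the standard continuity of projectivization on the Grassmannian; the remainder is a soft compactness argument.
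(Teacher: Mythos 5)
Your proof is correct, and it fleshes out exactly the argument the paper has in mind: the paper states the lemma without proof, calling it ``an immediate consequence of the Cartan property of the Anosov limit maps,'' and your compactness-plus-Cartan-property argument is precisely the standard way to make that immediacy explicit. The small preliminary observation that $1$-Anosov implies $(d-1)$-Anosov (via $\sigma_i(g^{-1})=\sigma_{d+1-i}(g)^{-1}$), ensuring $\Xi_1$ and $\Xi_{d-1}$ are defined along the sequence, and the continuity of $V\mapsto\mathbb{P}(V)$ in the Hausdorff metric are the right supporting details.
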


We close this section with the following ping-pong lemma for quasi-isometrically embedded linear groups acting on the projective space.

\begin{lemma}\label{pingpong} Let $\Delta_1,\Delta_2,\ldots, \Delta_{\ell}<\mathsf{SL}_m(\mathbb{K})$, $\ell\geq 3$, be finitely generated quasi-isometrically embedded subgroups. Fix $|\cdot|_{\Delta_i}:\Delta_i\rightarrow \mathbb{R}_{+}$ a word length function on $\Delta_i$. Suppose that $\mathcal{C}_1, \mathcal{C}_2, \mathcal{M}_2,\ldots, \mathcal{M}_{\ell}$ are non-empty subsets of $\mathbb{P}(\mathbb{K}^m)$, where $\mathcal{M}_j\subset \mathcal{C}_1$ for every $j=2,\ldots, \ell$, $\mathcal{M}_i\cap \mathcal{M}_j=\emptyset$ for $i\neq j$ and there is $\alpha>0$ with the property: 
\medskip

\noindent \textup{(i)} $\gamma \mathcal{C}_1\subset \mathcal{C}_2$ and $||\gamma v||\geq e^{\alpha|\gamma|_{\Delta_1}}||v||$ for every $\gamma\in \Delta_1\smallsetminus \{1\}$, $[v]\in \mathcal{C}_1$,\\
\noindent \textup{(ii)} $\gamma (\mathcal{C}_2\cup \mathcal{M}_j)\subset \mathcal{M}_i$ and $||\gamma \omega||\geq e^{\alpha|\gamma|_{\Delta_i}}||\omega||$ for every $\gamma\in \Delta_i\smallsetminus \{1\}$, $[\omega]\in \mathcal{C}_2\cup \mathcal{M}_j$, $2\leq i\neq j\leq \ell$.
\medskip

\noindent Then $\langle \Delta_1,\Delta_2,\ldots, \Delta_{\ell}\rangle$ is isomorphic to the free product $\Delta_1\ast \Delta_2\ast \cdots \ast \Delta_{\ell}$ and its inclusion in $\mathsf{SL}_m(\mathbb{K})$ is a quasi-isometric embedding.\end{lemma}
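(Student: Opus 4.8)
The plan is to run the classical ping-pong argument, tracking lengths carefully so as to recover the quasi-isometric embedding estimate from the group-theoretic ping-pong setup. The natural normal form to use is the free-product normal form: any nontrivial $g\in\langle\Delta_1,\ldots,\Delta_\ell\rangle$ is a product $g=s_1s_2\cdots s_r$ of syllables $s_j\in\Delta_{i_j}\smallsetminus\{1\}$ with $i_j\neq i_{j+1}$. I first want to show this syllable word maps injectively, establishing the abstract isomorphism with $\Delta_1\ast\cdots\ast\Delta_\ell$, and simultaneously read off the norm growth $\|\rho(g)v\|$ for a suitable test vector $[v]$, which gives the quasi-isometric embedding.

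\smallskip

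\emph{First step: set up the ping-pong table and reduce to a uniform picture.} The hypotheses give us, for a $\Delta_1$-syllable, the inclusion $\gamma\mathcal{C}_1\subset\mathcal{C}_2$, and for a $\Delta_i$-syllable ($i\geq 2$), $\gamma(\mathcal{C}_2\cup\bigcup_{j\neq i}\mathcal{M}_j)\subset\mathcal{M}_i$; since each $\mathcal{M}_j\subset\mathcal{C}_1$ and hence $\gamma\mathcal{M}_j\subset\gamma\mathcal{C}_1\subset\mathcal{C}_2$ when $\gamma\in\Delta_1$, and for $\gamma\in\Delta_i$ the image $\gamma\mathcal{C}_2\subset\mathcal{M}_i$ and $\gamma\mathcal{M}_j\subset\mathcal{M}_i$ for $j\neq i$, all the relevant source sets are swallowed into $\mathcal{C}_2$ (by a $\Delta_1$-move) or into some $\mathcal{M}_i\subset\mathcal{C}_1$ (by a $\Delta_i$-move, $i\geq 2$). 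Reading the word $g=s_1\cdots s_r$ from the right, pick a starting line $[v_0]$: if $i_r=1$ take $[v_0]\in\mathcal{C}_1$ arbitrary; if $i_r\geq 2$ take $[v_0]\in\mathcal{C}_2$, which is legitimate as a source for a $\Delta_{i_r}$-move provided $\mathcal{C}_2$ is nonempty — and $\mathcal{C}_2\supset s\mathcal{C}_1\neq\emptyset$ for any $s\in\Delta_1\smallsetminus\{1\}$, which exists since $\ell\geq 3$ forces $\Delta_1$ to be a ping-pong player, though in the degenerate case $\Delta_1=\{1\}$ one simply drops the first factor; I will assume each $\Delta_i$ is nontrivial, as otherwise the statement is about a shorter free product. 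Applying $s_r$ lands in $\mathcal{C}_2$ (if $i_r=1$) or in $\mathcal{M}_{i_r}\subset\mathcal{C}_1$ (if $i_r\geq 2$); in either case the result lies in $\mathcal{C}_1\cup\mathcal{C}_2$ and is a legal source for $s_{r-1}$ because $i_{r-1}\neq i_r$. Iterating, $\rho(s_j\cdots s_r)[v_0]$ stays inside $\mathcal{C}_1\cup\mathcal{C}_2$ for every $j$, and in fact $\rho(s_1\cdots s_r)[v_0]\in\mathcal{M}_{i_1}$ if $i_1\geq 2$ and $\in\mathcal{C}_2$ if $i_1=1$.

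\smallskip

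\emph{Second step: injectivity.} If $g=s_1\cdots s_r$ is a nonempty reduced word with $\rho(g)=I_m$ (equivalently $g=1$ in $\langle\Delta_i\rangle$), I conjugate so as to arrange $i_1\neq i_r$ — the standard trick: if $i_1=i_r$, replace $g$ by a cyclic conjugate $h g h^{-1}$ with $h\in\Delta_{i_1}\smallsetminus\{1\}$ and $h s_r$ still a nontrivial syllable... — more simply, I use that if $\rho(g)=I_m$ then $\rho(g)$ fixes every line, in particular fixes the line $[v_0]$ chosen above as a source, but by the first step $\rho(g)[v_0]$ lies in $\mathcal{M}_{i_1}$ (or $\mathcal{C}_2$), while $[v_0]\in\mathcal{C}_1$ (or $\mathcal{C}_2$); if $i_1\geq 2$ and $i_r=1$ then $[v_0]\in\mathcal{C}_1$, $\rho(g)[v_0]\in\mathcal{M}_{i_1}\subset\mathcal{C}_1$, which is not yet a contradiction, so I instead track a vector starting in $\mathcal{M}_{j}$ for some $j\notin\{i_1,i_r\}$ (possible since $\ell\geq 3$): then the final position is in $\mathcal{M}_{i_1}$ and the initial in $\mathcal{M}_j$, and $\mathcal{M}_{i_1}\cap\mathcal{M}_j=\emptyset$ gives $\rho(g)\neq I_m$. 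The remaining parity cases are handled identically by the choice of $j$. This yields the isomorphism $\langle\Delta_1,\ldots,\Delta_\ell\rangle\cong\Delta_1\ast\cdots\ast\Delta_\ell$.

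\smallskip

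\emph{Third step: the quasi-isometric embedding estimate.} Here I exploit the norm-expansion hypotheses (i) and (ii). With $[v_0]$ chosen as a source line of unit norm and $g=s_1\cdots s_r$ reduced, set $v_j:=\rho(s_j s_{j+1}\cdots s_r)v_0$. By the first step each $[v_{j}]=\rho(s_j\cdots s_r)[v_{j+1}/\|v_{j+1}\|\cdot\|v_{j+1}\|]$ lies in a legal source set for $s_{j-1}$, so the expansion estimate applies at every stage: $\|v_j\|\geq e^{\alpha|s_j|_{\Delta_{i_j}}}\|v_{j+1}\|$. Multiplying, $\|\rho(g)v_0\|=\|v_1\|\geq e^{\alpha\sum_j|s_j|_{\Delta_{i_j}}}\|v_0\|$. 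Now $\sum_j|s_j|_{\Delta_{i_j}}$ is, up to the standard bi-Lipschitz comparison between the word metric on a free product and the sum of word lengths of syllables (and up to enlarging the generating set of $\langle\Delta_i\rangle$ to include those of all $\Delta_i$), comparable to $|g|_{\Gamma}$ with $\Gamma=\Delta_1\ast\cdots\ast\Delta_\ell$: one has $|g|_\Gamma\leq\sum_j|s_j|_{\Delta_{i_j}}$ trivially, which is the direction we need. Hence $\sigma_1(\rho(g))\geq\|\rho(g)v_0\|/\|v_0\|\geq e^{\alpha|g|_\Gamma}$. Applying the same to $g^{-1}$ (whose syllable length equals that of $g$) gives $\sigma_1(\rho(g^{-1}))\geq e^{\alpha|g|_\Gamma}$, i.e. $\sigma_m(\rho(g))\leq e^{-\alpha|g|_\Gamma}$, so $\sigma_1(\rho(g))/\sigma_m(\rho(g))\geq e^{2\alpha|g|_\Gamma}$, which is exactly the characterization of a quasi-isometric embedding recalled in Section~\ref{definition} (with $\kappa=2\alpha$, $R=0$). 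In particular $\Gamma$ is quasi-isometrically embedded, hence finitely generated and undistorted in $\mathsf{SL}_m(\mathbb{K})$.

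\smallskip

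\emph{Main obstacle.} The bookkeeping in the second step — choosing, for each parity pattern of $(i_1,i_r)$, a starting set $\mathcal{M}_j$ or $\mathcal{C}_k$ whose image under $\rho(g)$ lands in a provably disjoint set — is where care is needed; the hypothesis $\ell\geq 3$ is used precisely to guarantee a third index $j\notin\{i_1,i_r\}$ is available. The only other delicate point is making sure the chosen source line is genuinely a legal input for the \emph{first} letter read (the rightmost syllable): this is why one separates the cases $i_r=1$ (use $\mathcal{C}_1$) and $i_r\geq 2$ (use $\mathcal{C}_2$), and why one must know $\mathcal{C}_2\neq\emptyset$, which follows from nontriviality of $\Delta_1$ together with hypothesis (i). Everything else is the routine telescoping of the norm estimate and the standard free-product metric comparison.
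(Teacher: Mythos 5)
Your Step 1 (tracking) and Step 3 (telescoping the norm and deducing the quasi-isometric embedding) are sound and reach the same destination as the paper; the paper merely packages the same argument as a nested two-player game, first showing $\Delta_2':=\langle\Delta_2,\ldots,\Delta_\ell\rangle\cong\Delta_2\ast\cdots\ast\Delta_\ell$ via ping-pong on the $\mathcal{M}_i$'s with $\mathcal{C}_2$, then playing $\Delta_1$ against $\Delta_2'$ on $\mathcal{C}_1,\mathcal{C}_2$. Your direct syllable-by-syllable tracking of a test line is equivalent.

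Step 2, however, is incorrect as written, and the claim that \emph{``the remaining parity cases are handled identically by the choice of $j$''} is false. If $i_1=i_r=1$ (possible for $r\geq 3$), your tracking places $\rho(g)[v_0]$ in $\mathcal{C}_2$ while $[v_0]\in\mathcal{M}_j\subset\mathcal{C}_1$, and the hypotheses give no disjointness between $\mathcal{C}_2$ and $\mathcal{M}_j$ (nor between $\mathcal{C}_2$ and $\mathcal{C}_1$). If $i_1,i_r\geq 2$ are distinct and $\ell=3$, there is no index $j\in\{2,\ldots,\ell\}$ with $j\notin\{i_1,i_r\}$, so the $\mathcal{M}_j$ you propose to start from does not exist; and starting from $\mathcal{C}_2$ instead requires $\mathcal{C}_2\cap\mathcal{M}_{i_1}=\emptyset$, which is not a hypothesis. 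The table is asymmetric -- $\Delta_1$ never acts on $\mathcal{C}_2$ -- so set-theoretic ping-pong alone does not keep $\mathcal{C}_2$ disjoint from $\mathcal{C}_1$. The fix is already in your Step 3: the telescoped bound $\|\rho(g)v_0\|\geq e^{\alpha\sum_j|s_j|_{\Delta_{i_j}}}\|v_0\|>\|v_0\|$ holds for every nonempty reduced word and directly forces $\rho(g)\neq I_m$; this is exactly how the paper obtains the isomorphism, as a byproduct of norm growth rather than of disjointness. Delete Step 2 and observe that Step 3 already yields injectivity; with that reorganization your argument is correct. (Also, nonemptiness of $\mathcal{C}_2$ is among the stated hypotheses, so the detour through $s\mathcal{C}_1$ is unnecessary.)
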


\begin{proof} Let $\Delta:=\Delta_1\ast \cdots \ast \Delta_{\ell}$ and $|\cdot|_{\Delta}:\Delta \rightarrow \mathbb{R}_{+}$ be the word length function restricting to $|\cdot|_{\Delta_i}$ on $\Delta_i$. We set $\Delta_2':=\langle \Delta_2,\ldots ,\Delta_{\ell}\rangle$. By using condition (ii) on $\mathcal{C}_2,\mathcal{M}_i$ and induction on $n\in \mathbb{N}$, we check that for any product of the form $\gamma_1\cdots \gamma_n\in \Delta_2'$, where $\gamma_i\neq I_m$ and no consecutive $\gamma_i$ lie in the same $\Delta_i$, every $[\omega]\in \mathcal{C}_2$ we have that \begin{align}\label{ping-pong1}||\gamma_1\cdots \gamma_n \omega|| \geq e^{\alpha\sum_{i=1}^{n}|\gamma_i|_{\Delta}}||\omega||\end{align} and $\gamma_1\cdots \gamma_n \omega\in \mathcal{M}_j$, where $j\in \{2,\ldots,\ell\}$ is the unique index with $\gamma_1\in \Delta_j$. This shows $\gamma_1\cdots \gamma_n \neq I_{m}$, hence, by (\ref{ping-pong1}), $\Delta_2'$ is quasi-isometrically embedded in $\mathsf{SL}_m(\mathbb{K})$ and isomorphic to the free product $\Delta_2 \ast \cdots \ast \Delta_{\ell}$. Now since $(\Delta_2'\smallsetminus \{I_m\})\mathcal{C}_2\subset \mathcal{C}_1$, by using (i) and (\ref{ping-pong1}) and induction on $r\in \mathbb{N}$, we deduce that for every product $\delta_1\cdots \delta_r\in \langle \Delta_1,\Delta_2'\rangle$, where $\delta_i\neq I_m$ and no consecutive $\delta_i$ lie in $\Delta_1$ or $\Delta_2'$, we have \begin{align}\label{ping-pong2} ||\delta_1\cdots \delta_rv|| \geq e^{\alpha\sum_{i=1}^{r}|\delta_i|_{\Delta}}||v||,\end{align} where $[v]\in \mathcal{C}_1$ if $\delta_r\in \Delta_1$ and $[v]\in \mathcal{C}_2$ if $\delta_r \in \Delta_2'$. In particular, $\sigma_1(\delta_1\cdots \delta_r)\geq e^{\alpha \sum_{i=1}^r|\delta|_{\Delta_i}}$, $\delta_1\cdots \delta_r\neq I_m$ and hence $\langle \Delta_1,\Delta_2\rangle$ is isomorphic to $\Delta_1\ast \Delta_2'$. Since $\Delta_1,\Delta_2'< \mathsf{SL}_d(\mathbb{K})$ are quasi-isometric embedded, (\ref{ping-pong2}) shows the natural inclusion $\Delta_1\ast \Delta_2'\rightarrow \mathsf{SL}_m(\mathbb{K})$ is a quasi-isometric embedding. This concludes the proof of the lemma.\end{proof}

\section{Quasi-isometrically embedded free products of Anosov groups}
Denote by $\mathcal{F}_{1,d-1}(\mathbb{K})\subset \mathbb{P}(\mathbb{K}^d)\times \mathsf{Gr}_{d-1}(\mathbb{K}^d)$ the space of $(1,d-1)$ partial flags. The purpose of this section is to prove the following theorem which is crucial for the construction of the examples in Theorem \ref{mainthm}.

\begin{theorem} \label{stable1} Let $\Gamma<\mathsf{SL}_d(\mathbb{K})$ be a $1$-Anosov subgroup such that there is a flag $y\in \mathcal{F}_{1,d-1}(\mathbb{K})$ which is transverse to the limit set of $\Gamma$ in $\mathcal{F}_{1,d-1}(\mathbb{K})$. Let $p,\ell \in \mathbb{N}_{\geq 2}$, $r\in \mathbb{Z}_{\geq 0}$ and consider the representations $\rho_{p,r}:\Gamma\rightarrow \mathsf{SL}_{pd+r}(\mathbb{K})$ and $\psi_{p,r}:\Gamma \rightarrow \mathsf{SL}_{pd+r}(\mathbb{K})$ defined as follows:

\begin{align}\label{rep-freeproduct} \rho_{p,r}(\gamma):=\begin{pmatrix}
I_r &  &  & \\ 
 & \gamma &  & \\ 
 &  & \ddots  & \\ 
 &  &  & \gamma
\end{pmatrix}, \ \psi_{p,r}(\gamma):=\begin{pmatrix}
I_r &  &  & \\ 
 & I_{d} &  & \\ 
 &  & I_{(p-2)d}  & \\ 
 &  &  & \gamma\end{pmatrix} \ \ \gamma \in \Gamma.\end{align} There exist finite-index subgroups $\Gamma_1,\Gamma_2,\ldots, \Gamma_{\ell}<\Gamma$, $w_2,\ldots, w_{\ell}\in \mathsf{SL}_{pd+r}(\mathbb{K})$ and $\zeta>0$ with the property: for every $g_1,g_2,\ldots, g_{\ell}\in \mathsf{GL}_{pd+r}(\mathbb{K})$ with $||g_i-I_{pd+r}||<\zeta$, the unique representation $\phi_{p,r}(\cdot, g_1,\ldots,g_{\ell}):\Gamma_1 \ast \Gamma_2 \ast \cdots\ast \Gamma_{\ell}\rightarrow \mathsf{SL}_{pd+r}(\mathbb{K})$ satisfying \begin{align}\label{def-phi(p,r)}\phi_{p,r}(\gamma,g_1,\ldots,g_{\ell}):=\left\{\begin{matrix}
 g_1\rho_{p,r}(\gamma)g_1^{-1}, & \gamma \in \Gamma_1  \\ 
(g_iw_i)\psi_{p,r}(\gamma)(g_iw_i)^{-1}, &  \gamma \in \Gamma_i,\ i=2,\ldots, \ell 
\end{matrix}\right.\end{align} is a quasi-isometric embedding.\end{theorem}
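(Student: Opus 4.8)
The plan is to realise every $\phi_{p,r}(\cdot,g_1,\dots,g_\ell)$ as a ping-pong combination in $\mathbb{P}(\mathbb{K}^{pd+r})$ and invoke Lemma \ref{pingpong}. Write $\mathbb{K}^{pd+r}=W_0\oplus W_1\oplus\dots\oplus W_p$ with $W_0=\mathbb{K}^r$ and $W_j\cong\mathbb{K}^d$ for $1\le j\le p$, and denote by $v_p$ the $W_p$-coordinate of a vector $v$. Since $\sigma_d(g)\le\sigma_2(g)$, the $1$-Anosov group $\Gamma$ is quasi-isometrically embedded in $\mathsf{SL}_d(\mathbb{K})$ and is also $(d-1)$-Anosov; in particular $\sigma_1(\gamma)\to\infty$ and $\sigma_2(\gamma)/\sigma_1(\gamma)\to 0$ as $|\gamma|_\Gamma\to\infty$. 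A direct computation with the Cartan decomposition (\ref{decomp}) shows that $\rho_{p,r}$ is $p$-Anosov with
$$\Xi_p(\rho_{p,r}(\gamma))=\{0\}_{W_0}\oplus\Xi_1(\gamma)^{\oplus p},\qquad \Xi_{pd+r-p}(\rho_{p,r}(\gamma)^{-1})=W_0\oplus\Xi_{d-1}(\gamma^{-1})^{\oplus p},$$
while $\psi_{p,r}$ is $1$-Anosov with $\Xi_1(\psi_{p,r}(\gamma))$ the copy of $\Xi_1(\gamma)$ inside $W_p$ and $\Xi_{pd+r-1}(\psi_{p,r}(\gamma)^{-1})=W_0\oplus\dots\oplus W_{p-1}\oplus\Xi_{d-1}(\gamma^{-1})$; both are faithful quasi-isometric embeddings. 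Let $\xi^1,\xi^{d-1}$ be the limit maps of the inclusion $\Gamma\hookrightarrow\mathsf{SL}_d(\mathbb{K})$. Applying Lemma \ref{close} to this inclusion (which is $1$- and $(d-1)$-Anosov) and to $\gamma\mapsto\gamma^{-1}$, for every $\theta>0$ all but finitely many $\gamma\in\Gamma$ satisfy $\textup{dist}_{\mathbb{P}(\mathbb{K}^d)}(\Xi_1(\gamma),\xi^1(\partial_\infty\Gamma))\le\theta$ and $\mathbb{P}(\Xi_{d-1}(\gamma^{\pm1}))\subset\mathcal{N}_\theta(\bigcup_{\eta\in\partial_\infty\Gamma}\mathbb{P}(\xi^{d-1}(\eta)))$. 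Finally, from (\ref{decomp}) one has the elementary norm estimates $\|\rho_{p,r}(\gamma)v\|\ge\sigma_1(\gamma)\|v\|\,d_{\mathbb{P}}(v,\mathbb{P}(W_0\oplus\Xi_{d-1}(\gamma^{-1})^{\oplus p}))$ and $\|\psi_{p,r}(\gamma)v\|\ge\sigma_1(\gamma)\|v_p\|\,d_{\mathbb{P}}([v_p],\mathbb{P}(\Xi_{d-1}(\gamma^{-1})))$.

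Fix a unit vector $u$ spanning $y^1$ and set $q_1:=[(0;u;u;\dots;u)]$, the diagonal line. Transversality of $y$ to the limit set gives $c>0$ with $d_{\mathbb{P}}([u],\mathbb{P}(\xi^{d-1}(\eta)))\ge c$ for all $\eta$; hence, combining with the previous paragraph, $d_{\mathbb{P}}(q_1,\mathbb{P}(W_0\oplus\Xi_{d-1}(\gamma^{-1})^{\oplus p}))\ge c-\theta$ for all but finitely many $\gamma$. Since $\Gamma$ is linear, hence residually finite, we may choose finite-index subgroups $\Gamma_1,\dots,\Gamma_\ell<\Gamma$ all of whose non-trivial elements have $\Gamma$-word length larger than a constant $R_0$ to be fixed large; then for every non-trivial $\gamma$ in any $\Gamma_i$ the ratios $\sigma_1(\gamma)$, $\sigma_1(\gamma)/\sigma_2(\gamma)$ are as large, and $\Xi_1(\gamma)$, $\Xi_{d-1}(\gamma^{\pm1})$ as close to the limit set, as desired. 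Using now that $y^1\subset y^{d-1}$ and that $\xi^1(\partial_\infty\Gamma)$ is transverse to $y^{d-1}$, we pick $w_2,\dots,w_\ell\in\mathsf{SL}_{pd+r}(\mathbb{K})$ with the following properties, all with a fixed positive margin: (a) $w_i$ maps the attracting curve $\bigcup_\eta\mathbb{P}(\xi^1(\eta))\subset\mathbb{P}(W_p)$ into a small ball $B_i$ around a point $q_1^{(i)}$, where $q_1^{(2)},\dots,q_1^{(\ell)}$ are distinct points close to $q_1$ and the $B_i$ are pairwise disjoint; (b) $w_i^{-1}$ maps the diagonal $d$-plane $\{(0;x;\dots;x):x\in\mathbb{K}^d\}$ into a $d$-dimensional subspace whose $W_p$-coordinate lies on the single line $y^1$ (this only requires $w_i^{-1}$ to carry the hyperplane $\{(0;x;\dots;x):x\in y^{d-1}\}$ into $W_0\oplus\dots\oplus W_{p-1}$ and the line $q_1$ onto a line with $W_p$-coordinate $y^1$), and $w_i^{-1}$ maps a fixed neighbourhood of $q_1$ into the region where the $W_p$-coordinate is close to $y^1$ and bounded away from $0$.

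With these choices, put $\mathcal C_1$ a small ball around $q_1$; $\mathcal C_2$ a small neighbourhood of the diagonal curve $\{[(0;u_\eta;\dots;u_\eta)]:\eta\in\partial_\infty\Gamma,\ \langle u_\eta\rangle=\xi^1(\eta)\}$; and $\mathcal M_i$ a small neighbourhood of $B_i$, contained in $\mathcal C_1$ by (a) and pairwise disjoint. By (b) and transversality of $\xi^1(\partial_\infty\Gamma)$ to $y^{d-1}$, $w_i^{-1}\mathcal C_1$ and $w_i^{-1}\mathcal C_2$ lie in the region where the $W_p$-coordinate is close to $y^1$, hence substantial and uniformly transverse to all $\Xi_{d-1}(\gamma^{-1})$. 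Taking $R_0$ large and $\theta$, $\mathcal C_1$, $\mathcal C_2$, $\mathcal M_i$ small enough, Lemmas \ref{bound-hyperplane}, \ref{close-I} and the estimates above yield, for every non-trivial $\gamma$ in $\Gamma_1$ (resp. $\Gamma_i$, $i\ge2$): $\rho_{p,r}(\gamma)\mathcal C_1\subset\mathcal C_2$ and $\|\rho_{p,r}(\gamma)v\|\ge e^{\alpha|\gamma|_{\Gamma_1}}\|v\|$ for $[v]\in\mathcal C_1$ — using $d_{\mathbb{P}}(v,\mathbb{P}(\Xi_{pd+r-p}(\rho_{p,r}(\gamma)^{-1})))\ge c/2$, and that for $[v]$ near the diagonal line the $W_j$-coordinate ratios of $\rho_{p,r}(\gamma)v$ stay near $(1:\dots:1)$, so $\rho_{p,r}(\gamma)v$ stays near the diagonal curve; and $w_i\psi_{p,r}(\gamma)w_i^{-1}(\mathcal C_2\cup\mathcal M_j)\subset\mathcal M_i$ and $\|w_i\psi_{p,r}(\gamma)w_i^{-1}\omega\|\ge e^{\alpha|\gamma|_{\Gamma_i}}\|\omega\|$ for $[\omega]\in\mathcal C_2\cup\mathcal M_j$, $j\ne i$ — because $\psi_{p,r}(\gamma)$ expands the substantial, transverse $W_p$-coordinate of $w_i^{-1}\omega$ by a factor $\asymp\sigma_1(\gamma)$ and pushes $w_i^{-1}\omega$ into a neighbourhood of $\bigcup_\eta\mathbb{P}(\xi^1(\eta))\subset\mathbb{P}(W_p)$, whose $w_i$-image lies in $\mathcal M_i$. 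All these inclusions and inequalities are strict with a margin independent of $\gamma$, so by Lemma \ref{close-I} they persist when $\rho_{p,r}(\gamma),\psi_{p,r}(\gamma)$ are replaced by $g_1\rho_{p,r}(\gamma)g_1^{-1},(g_iw_i)\psi_{p,r}(\gamma)(g_iw_i)^{-1}$ for $\|g_i-I_{pd+r}\|<\zeta$ with $\zeta$ small enough. Lemma \ref{pingpong}, applied with $\Delta_i:=\phi_{p,r}(\Gamma_i,g_1,\dots,g_\ell)$ and the word metrics of the $\Gamma_i$ (for $\ell=2$ one uses instead the standard two-player ping-pong, valid since transversality of $y$ forces $q_1\notin\xi^1(\partial_\infty\Gamma)$, hence $\mathcal C_1\cap\mathcal C_2=\emptyset$), gives that $\langle\Delta_1,\dots,\Delta_\ell\rangle\cong\Gamma_1\ast\dots\ast\Gamma_\ell$ and that $\phi_{p,r}(\cdot,g_1,\dots,g_\ell)$ is a quasi-isometric embedding.

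I expect the main obstacle to be the simultaneous choice of $w_2,\dots,w_\ell$ and of $\mathcal C_1,\mathcal C_2,\mathcal M_i$ making all the hypotheses of Lemma \ref{pingpong} hold at once. The difficulty is that $\rho_{p,r}$ is only $p$-Anosov, so its attracting locus $\mathbb{P}(\Xi_p(\rho_{p,r}(\gamma)))$ is a projective $(p-1)$-plane and not a point: one must arrange $\mathcal C_1$ so that $\rho_{p,r}(\Gamma_1)\mathcal C_1$ stays inside a small, prescribed part of this locus — which is what forces $\mathcal C_1$ to sit near the diagonal line $q_1$ — while at the same time keeping that part (and the sets $\mathcal M_i$) away from the codimension-one repelling loci of the $\psi_{p,r}$-pieces, by routing the relevant $W_p$-coordinates onto the fixed line $y^1$. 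The transversality of \emph{both} $y^1$ and $y^{d-1}$ to the limit set of $\Gamma$ is exactly what makes these two requirements compatible. Once everything holds with uniform margins, the robustness under the perturbations $g_i$ is a routine consequence of Lemma \ref{close-I}.
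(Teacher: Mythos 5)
Your overall strategy (explicit ping-pong sets in $\mathbb{P}(\mathbb{K}^{pd+r})$ plus Lemma \ref{pingpong}, and robustness via Lemma \ref{close-I}) is the same as the paper's, but the sets themselves are chosen differently. The paper takes $\mathcal{C}_1$ to be a small ball around the \emph{corner} point $[v_0']=[(0,\ldots,0,v_0)]$, with $v_0$ the line of the transverse flag placed in the last $\mathbb{K}^d$-block only, and $\mathcal{C}_2$ to be the \emph{large} set $\mathbb{P}(\mathbb{K}^q)\smallsetminus\mathcal{N}_{4\varepsilon^2}(\mathbb{P}(W'))$, the complement of a neighbourhood of the hyperplane $W'=\mathbb{K}^{(p-1)d+r}\oplus W$. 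With this choice the inclusion $\rho_{p,r}(\gamma)\mathcal{C}_1\subset\mathcal{C}_2$ is immediate from the last-block coordinate alone, and the $\psi_{p,r}$-side is the natural Anosov expansion/contraction set. You instead take $\mathcal{C}_1$ to be a ball around the diagonal line $q_1=[(0;u;\ldots;u)]$ and $\mathcal{C}_2$ a thin neighbourhood of the diagonal curve $\{[(0;u_\eta;\ldots;u_\eta)]\}$. That $\rho_{p,r}$ really sends $\mathcal{C}_1$ into a thin tube around this curve takes a little more care (one needs all $p$ block-norms $\|\gamma v_j\|$ to stay comparable, not merely projectively close to $\Xi_1(\gamma)$), but this part of your sketch can be made to work.

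The genuine problem is your condition (b) on $w_i$. Since $y=(y^1,y^{d-1})$ is a \emph{flag}, $y^1\subset y^{d-1}$ (you state this yourself). Hence the diagonal line $q_1=\langle(0;u;\ldots;u)\rangle$ with $u$ spanning $y^1$ is \emph{contained in} the hyperplane $D':=\{(0;x;\ldots;x):x\in y^{d-1}\}$. Your parenthetical therefore asks $w_i^{-1}$ both to carry $D'$ (and so $q_1$) into $W_0\oplus\cdots\oplus W_{p-1}$, forcing $\pi_{W_p}(w_i^{-1}q_1)=0$, and simultaneously to carry $q_1$ onto a line with nonzero $W_p$-coordinate on $y^1$. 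These two requirements are contradictory, so the element $w_i$ you describe does not exist, and the chain of deductions in the next paragraph (``By (b) and transversality\ldots, $w_i^{-1}\mathcal{C}_1$ and $w_i^{-1}\mathcal{C}_2$ lie in the region where the $W_p$-coordinate is \ldots\ bounded away from $0$'') breaks down. The slip comes from treating $q_1$ as if it were a direct-sum complement of $D'$ inside the diagonal $d$-plane, which it is not; $\xi^1(\eta)\subset\xi^{d-1}(\eta)$ on the limit-set side causes the same tension. The issue can be repaired, e.g.\ by replacing $D'$ with a small perturbation (a hyperplane of the diagonal $d$-plane close to $D'$ but not containing $q_1$, which still avoids the diagonal curve by compactness of the transversality margin), but the proposal as written fails at this step. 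The paper sidesteps the whole difficulty by never using the diagonal: placing $\mathcal{C}_1$ at the corner $[v_0']$ and taking $\mathcal{C}_2$ to be a complement of a hyperplane neighbourhood makes the $\rho_{p,r}$-step a one-line computation, and the $\psi_{p,r}$-step becomes the standard Anosov ping-pong of a conjugated $1$-Anosov representation against a nearby transverse flag $(x_i,V_i)$.
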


\begin{proof} Let $q:=pd+r$ and  $(\xi^{1},\xi^{d-1}):\partial_{\infty}\Gamma \rightarrow \mathbb{P}(\mathbb{K}^d)\times \mathsf{Gr}_{d-1}(\mathbb{K}^d)$ be the limit maps of $\Gamma$. By assumption, there is a flag $y=([v_0],W)$, $||v_0||=1$, transverse to  $\big\{(\xi^1(\eta), \xi^{d-1}(\eta)):\eta \in \partial_{\infty}\Gamma\big\}$ the limit set of $\Gamma$ and let $$\varepsilon:= \frac{1}{11} \inf_{\eta \in \partial_{\infty}\Gamma}\min \big\{\textup{dist}_{\mathbb{P}(\mathbb{K}^d)}(\xi^1(\eta), \mathbb{P}(W)),\  \textup{dist}_{\mathbb{P}(\mathbb{K}^d)}([v_0], \mathbb{P}(\xi^{d-1}(\eta)) )\big \}>0.$$
Since $\Gamma<\mathsf{SL}_d(\mathbb{K})$ is $1$-Anosov and quasi-isometrically embedded, by Lemma \ref{close}, we may pass to a finite-index subgroup $\Gamma_1<\Gamma$, fix a word metric $|\cdot|_{\Gamma_1}:\Gamma_1\rightarrow \mathbb{R}_{+}$ and $\alpha_1>0$  with the following properties: \begin{align}\label{stable1-eq1} \textup{dist}_{\mathbb{P}(\mathbb{K}^d)}\big(\Xi_1(\gamma), \mathbb{P}(W)\big)\geq 10\varepsilon,\ & \textup{dist}_{\mathbb{P}(\mathbb{K}^d)}\big([v_0], \mathbb{P}(\Xi_{d-1}(\gamma))\big)\geq 10\varepsilon\\ \label{stable1-eq1'}\sigma_1(\gamma)\geq e^{\alpha_1|\gamma|_{\Gamma_1}}\geq \varepsilon^{-5}, &\ \frac{\sigma_1(\gamma)}{\sigma_2(\gamma)}\geq \varepsilon^{-5} \ \ \forall \gamma \in \Gamma_1\smallsetminus \{1\}.\end{align}

Now set $v_0':=\big(0,\ldots,0, v_0)^{t}\in \mathbb{K}^{q}, W':= \mathbb{K}^{(p-1)d+r}\oplus W$ and consider the sets $$\mathcal{C}_{1}:=B_{2\varepsilon}([v_0']), \ \mathcal{C}_2:=\mathbb{P}(\mathbb{K}^{q})\smallsetminus \mathcal{N}_{4\varepsilon^2}\big(\mathbb{P}(W')\big).$$  We need the following claim:

\begin{claim}\label{Claim-1} For every $\gamma \in \Gamma_1\smallsetminus \{1\}$ and $g\in \mathsf{GL}_{q}(\mathbb{K})$ with $||g-I_{q}||<\varepsilon^{5}$, $g\rho_{p,r}(\gamma)g^{-1}\mathcal{C}_1\subset \mathcal{C}_2$. Moreover for every $[\omega]\in \mathcal{C}_1$, $$\big|\big|g\rho_{p,r}(\gamma)g^{-1}\omega\big|\big| \geq e^{\frac{\alpha_1}{2}|\gamma|_{\Gamma_1}} ||\omega||.$$\end{claim}

\begin{proof}[Proof of Claim \ref{Claim-1}] Let $g\in \mathsf{GL}_{q}(\mathbb{K})$ with $||g-I_{q}||<\varepsilon^{5}$. By Lemma \ref{close-I}, $g^{-1}B_{2\varepsilon}([v_0'])\subset B_{3\varepsilon}([v_0'])$. Fix $[\omega] \in B_{2\varepsilon}([v_0'])$, where $||\omega||=1$, $[g^{-1}\omega]\in B_{3\varepsilon}([v_0'])$ and write \begin{align*} \frac{g^{-1}\omega}{||g^{-1}\omega||}&=\big(\omega_{1},\omega_{2})^t \in \mathbb{K}^{(p-1)d+r}\oplus \mathbb{K}^d,\ ||\omega_{1}||\leq 3\varepsilon,\ ||\omega_2||\geq 1-3\varepsilon\ \textup{and}\\ &d_{\mathbb{P}}([\omega_{2}], [v_0])\leq  \frac{d_{\mathbb{P}}([g^{-1}\omega],[v_0'])}{||\omega_{2}||}\leq \frac{3\varepsilon}{||\omega_{2}||}\leq 6\varepsilon.\end{align*} Fix $\gamma \in \Gamma_1\smallsetminus\{1\}$. By using Lemma \ref{bound-hyperplane} and (\ref{stable1-eq1}), we obtain the lower bounds: \begin{align*}\big| \big| g\rho_{p,r}(\gamma)g^{-1}\omega \big|\big|&\geq \frac{||g^{-1}\omega||}{\sigma_1(g^{-1})}\Big| \Big| \rho_p(\gamma)\frac{g^{-1}\omega}{||g^{-1}\omega \big|\big|}\Big|\Big|\geq \frac{1}{4} \big|\big| \big(\textup{diag}(I_r,\gamma,\ldots, \gamma)\omega_1,\gamma \omega_2\big)^t\big|\big|\\ &\geq \frac{1}{4} \big|\big|\gamma \omega_2 \big|\big| \geq \frac{||\omega_2||}{4}\sigma_1(\gamma) \textup{dist}_{\mathbb{P}(\mathbb{K}^d)}\big([\omega_2], \mathbb{P}(\Xi_{d-1}(\gamma^{-1}))\big)\\ &\geq \frac{1-3\varepsilon}{4}\sigma_1(\gamma)\big(\textup{dist}_{\mathbb{P}(\mathbb{K}^d)}([v_0],\mathbb{P}(\Xi_{d-1}(\gamma^{-1}))\big)-d_{\mathbb{P}}([\omega_2],[v_0])\big)\\ &\geq \varepsilon(1-3\varepsilon)\sigma_1(\gamma) \geq \sqrt{\sigma_1(\gamma)}\\ &\geq e^{\frac{\alpha_1}{2}|\gamma|_{\Gamma_1}}. \end{align*} Moreover, with respect to the decomposition $\mathbb{K}^{q}=\mathbb{K}^{(p-1)d+r}\oplus \mathbb{K}^d$ we can write \begin{align}\label{stable1-eq2} \frac{\rho_{p,r}(\gamma)g^{-1}\omega}{||\rho_{p,r}(\gamma)g^{-1}\omega||}=\frac{||g^{-1}\omega||\textup{diag}(I_r, \gamma,\ldots,\gamma)\omega_1}{||\rho_{p,r}(\gamma)g^{-1}\omega||}+\frac{||g^{-1}\omega||\gamma \omega_2}{||\rho_{p,r}(\gamma)g^{-1}\omega||},\end{align}where $||\gamma\omega_2||\geq 4\varepsilon(1-3\varepsilon)\sigma_1(\gamma)$, and check that $$\textup{dist}_{\mathbb{P}(\mathbb{K}^d)}\big([\omega_2], \Xi_{d-1}(\gamma^{-1})\big)\geq \textup{dist}_{\mathbb{P}(\mathbb{K}^d)}([v_0], \Xi_{d-1}(\gamma^{-1})\big)-d_{\mathbb{P}}\big([v_0],[\omega_2])\geq 4\varepsilon.$$ Thus, since $\frac{\sigma_1(\gamma)}{\sigma_2(\gamma)}\geq \varepsilon^{-5}$, Lemma \ref{bound-hyperplane} gives the bound \begin{align}\label{stable1-eq3} d_{\mathbb{P}}\big([\gamma\omega_2],\Xi_1(\gamma)\big)\leq \frac{1}{4\varepsilon}\frac{\sigma_2(\gamma)}{\sigma_1(\gamma)}\leq \varepsilon. \end{align} Using (\ref{stable1-eq2}) and (\ref{stable1-eq3}) and Lemma \ref{close-I}, we have the bounds: \begin{align*}\textup{dist}_{\mathbb{P}(\mathbb{K}^{q})}\big([g\rho_{p,r}(\gamma)g^{-1}\omega], \mathbb{P}(W')\big) &\geq \textup{dist}_{\mathbb{P}(\mathbb{K}^{q})}\big(\rho_{p,r}(\gamma)[g^{-1}\omega], \mathbb{P}(W')\big)-d_{\mathbb{P}}\big(g[\rho_{p,r}(\gamma)g^{-1}\omega],[\rho_{p,r}(\gamma)g^{-1}\omega]\big)\\
&\geq \textup{dist}_{\mathbb{P}(\mathbb{K}^{q})}\big(\rho_{p,r}(\gamma)[g^{-1}\omega], \mathbb{P}(W')\big)-4\varepsilon^{5}\\ &= \textup{dist}_{\mathbb{P}(\mathbb{K}^{q})}\Big(\Big[\frac{||g^{-1}\omega||\textup{diag}(I_r,\gamma,\ldots,\gamma)\omega_1+||g^{-1}\omega||\gamma \omega_2}{||\rho_{p,r}(\gamma)g^{-1}\omega||}\Big], \mathbb{P}(\mathbb{K}^{(p-1)d}\oplus W)\Big)-4\varepsilon^{5}\\ &=\frac{||g^{-1}\omega||\cdot ||\gamma \omega_2||}{||\rho_{p,r}(\gamma)g^{-1}\omega||} \textup{dist}_{\mathbb{P}(\mathbb{K}^d)}\big( [\gamma \omega_2], \mathbb{P}(W)\big)-4\varepsilon^{5}\\ &\geq \frac{\varepsilon(1-3\varepsilon)\sigma_1(\gamma)}{\sigma_1(\gamma)}\Big(\textup{dist}_{\mathbb{P}(\mathbb{K}^d)}\big(\Xi_{1}(\gamma), \mathbb{P}(W)\big)-d_{\mathbb{P}}\big([\gamma \omega_2], \Xi_{1}(\gamma)\big)\Big)-4\varepsilon^{5}\\ &\geq \frac{9\varepsilon^2}{2}-4\varepsilon^{5}> 4\varepsilon^2, \end{align*} thus $[g\rho_{p,r}(\gamma)g^{-1}\omega]\in \mathcal{C}_2$. This completes the proof of Claim \ref{Claim-1}. \end{proof}

Now, let us observe that the representation $\psi_{p,r}:\Gamma_1\rightarrow \mathsf{SL}_{q}(\mathbb{K})$ is $1$-Anosov with limit maps $(\xi_{\psi_{p,r}}^1,\xi_{\psi_{p,r}}^{q-1}):\partial_{\infty}\Gamma_1\rightarrow \mathbb{P}(\mathbb{K}^{q})\times \mathsf{Gr}_{q-1}(\mathbb{K}^{q})$, $$\xi_{\psi_{p,r}}^1(\eta)=\xi^1(\eta), \ \xi_{\psi_{p,r}}^{q-1}(\eta)=\mathbb{K}^{(p-1)d+r}\oplus \xi^{d-1}(\eta),\ \eta\in \partial_{\infty}\Gamma.$$ 

The flag $y':=([v_0'], W')$, $v_0'=(0,\ldots,0,v_0)^t$, $W'=\mathbb{K}^{(p-1)d+r}\oplus W$, is transverse to the limit set of $\psi_{p,r}$ in $\mathcal{F}_{1,q-1}(\mathbb{K})$. Choose $\theta,\epsilon>0$ with $0<\epsilon<40\theta<\varepsilon^5$, depending only on the flag $y'$ and $\varepsilon>0$, and pairwise transverse flags $(x_2,V_2),\ldots, (x_{\ell},V_{\ell})\in \mathcal{F}_{1,d-1}(\mathbb{K})$ in a $10\theta$-neighbourhoood of $y'$, tranvserse to the limit set of $\psi_{p,r}(\Gamma_1)$ in $\mathcal{F}_{1,d-1}(\mathbb{K})$, such that:
\medskip

\noindent \textup{(i)} $\mathcal{M}_i:=B_{2\epsilon}(x_i)\subset \mathcal{C}_1$ and $\mathcal{M}_i\cap \mathcal{M}_j=\emptyset$ for $2\leq i\neq j\leq \ell$,\\
\noindent \textup{(ii)} $\textup{dist}_{\mathbb{P}(\mathbb{K}^q)}(\mathcal{M}_j, \mathbb{P}(V_i))\geq 12\epsilon$, for $2\leq i\neq j\leq \ell$, \\
\noindent \textup{(iii)} $\textup{dist}_{\mathbb{P}(\mathbb{K}^q)}(\mathcal{M}_i, \mathbb{P}(W'))\geq \theta$ and $\mathbb{P}(V_{i})\subset \mathcal{N}_{11\theta}(\mathbb{P}(W'))$ for $i=2,\ldots, \ell$.
\medskip

We also need the following claim.

\begin{claim} \label{Claim-2} For every $i=2,\ldots, \ell$ there exists a finite-index subgroup $\Gamma_i<\Gamma_1$, a word metric $|\cdot|_{\Gamma_i}:\Gamma_i\rightarrow \mathbb{R}_{+}$, $\alpha_i>0$, and $w_i\in \mathsf{SL}_q(\mathbb{K})$ with the property: for every $j\neq i$, $\delta\in \Gamma_i\smallsetminus \{1\}$ and $h\in \mathsf{GL}_{q}(\mathbb{K})$ with $||h-I_{q}||<\epsilon^{4}$, $$h(w_i\psi_{p,r}(\delta)w_i^{-1})h^{-1} \big(\mathcal{C}_2\cup \mathcal{M}_j\big)\subset \mathcal{M}_i.$$  In addition, for every $[v]\in \mathcal{C}_2\cup \mathcal{M}_j$, $$\big|\big| h(w_i\psi_{p,r}(\delta)w_i^{-1})h^{-1}v\big|\big|\geq e^{\frac{\alpha_i}{2}|\delta|_{\Gamma_i}} ||v||$$\end{claim}

\begin{proof}[Proof of Claim \ref{Claim-2}] For $i=2,\ldots, \ell$, since $(x_i,V_i)$ is transverse to the limit set of $\psi_{p,r}(\Gamma_1)$, we choose $w_i\in \mathsf{SL}_{q}(\mathbb{K})$, depending only on $\theta,\epsilon>0$, with $d_{\mathbb{P}}(w_i \xi_{\psi_{p,r}}^1(\eta), x_{i}^{+}) \leq \epsilon^3,\ \mathbb{P}(w_i \xi_{\psi_{p,r}}^{pd-1}(\eta))\subset \mathcal{N}_{\epsilon^3}(\mathbb{P}(V_{i}))$ for every $\eta\in \partial_{\infty}\Gamma_1$. In particular, we have the inclusions: \begin{align}\label{stable-inclusion1'}\mathcal{N}_{\frac{\epsilon}{2}}\big(w_i\xi_{\psi_{p,r}}^{1}(\partial_{\infty}\Gamma_1)\big)\subset B_{\epsilon}(x_{i}),\  \mathcal{N}_{\frac{\theta}{4}} \Big( \bigcup_{\eta\in \partial_{\infty}\Gamma_1} \mathbb{P}(w\xi_{\psi_{p,r}}^{q-1}(\eta)) \Big) \subset \mathcal{N}_{12\theta}(\mathbb{P}(W')).\end{align} Let $\psi_{p,r}^{(i)}:=w_i\psi_{p,r} w_i^{-1}$. By Lemma \ref{close} and (\ref{stable-inclusion1'}) and the choices from (i), (ii), (iii), we may pass to a finite-index subgroup $\Gamma_i<\Gamma_1$, fix a word metric $|\cdot|_{\Gamma_i}:\Gamma_i\rightarrow \mathbb{R}_{+}$ (which is the restriction of $|\cdot|_{\Gamma_1}$) and choose $\alpha_i>0$ with the following properties: \begin{align}\label{stable1-inclusion1} \Xi_1(\psi^{(i)}_{p,r}(\delta))\in \mathcal{N}_{\frac{\epsilon}{4}}\big(w_i\xi_{\psi_{p,r}}^{1}(\partial_{\infty}\Gamma_1)\big)&\subset B_{\epsilon}(x_i) \ \forall  \delta\in \Gamma_i\smallsetminus \{1\},\\ 
\label{stable1-inclusion2} \mathcal{C}_{3i}:=\mathcal{N}_{\frac{\theta}{4}} \Big(\bigcup_{\delta \in \Gamma_2\smallsetminus \{1\}} \mathbb{P}\big(\Xi_{q-1}(\psi^{(i)}_{p,r}(\delta)))\Big)&\subset \mathcal{N}_{13\theta}(\mathbb{P}(W')), \\\ 
\label{stable1-bounds1''}   \mathbb{P}\big(\Xi_{q-1}(\psi_{p,r}^{(i)}(\delta)\big)& \subset \mathcal{N}_{\epsilon}(\mathbb{P}(V_i)), \  \forall \delta \in \Gamma_i\smallsetminus \{1\}\\
\label{stable1-bounds1'}   \textup{dist}_{\mathbb{P}(\mathbb{K}^q)}\big(B_{3\epsilon}(x_{j}), \mathbb{P}(\Xi_{q-1}(\psi_{p,r}^{(i)}(\delta))\big)&\geq 8\epsilon, \ i\neq j, \  \forall \delta \in \Gamma_i\smallsetminus \{1\} \\
\label{stable1-bounds1} \sigma_1\big(\psi^{(i)}_{p,r}(\delta)\big)\geq e^{\alpha_i |\delta|_{\Gamma_i}}\geq \epsilon^{-3},\  & \frac{\sigma_1(\psi^{(i)}_{p,r}(\delta))}{\sigma_2(\psi^{(i)}_{p,r}(\delta))}\geq \epsilon^{-3}  \ \forall \delta \in \Gamma_i\smallsetminus \{1\}.
\end{align} 

Fix $\delta \in \Gamma_i\smallsetminus \{1\}$ and $2\leq j\neq i\leq \ell $. If $x\in (\mathbb{P}(\mathbb{K}^{q})\smallsetminus \mathcal{C}_{3i})\cup B_{\epsilon}(x_{j})$, by the definition of $\mathcal{C}_{3i}$ and (\ref{stable1-bounds1'}),  $\textup{dist}_{\mathbb{P}(\mathbb{K}^{q})}(x,\Xi_{q-1}(\psi^{(i)}_{p,r}(\delta^{-1})))\geq 8\epsilon$, thus Lemma \ref{bound-hyperplane} and (\ref{stable1-inclusion1}) show that, \begin{align*}\textup{dist}_{\mathbb{P}(\mathbb{K}^{q})}\big(\psi^{(i)}_{p,r}(\delta)x, w_i\xi_{\psi_{p,r}}^{1}(\partial_{\infty}\Gamma_1) \big)&\leq d_{\mathbb{P}}\big(\psi^{(i)}_{p,r}(\delta)x, \Xi_1(\psi^{(i)}_{p,r}(\delta))\big)+\textup{dist}_{\mathbb{P}(\mathbb{K}^{q})}\big(\Xi_1(\psi^{(i)}_{p,r}(\delta)), w_i\xi_{\psi_{p,r}}^{1}(\partial_{\infty}\Gamma_1) \big)\\ &\leq \frac{1}{8\epsilon}\frac{\sigma_2(\psi^{(i)}_{p,r}(\delta))}{\sigma_1(\psi^{(i)}_{p,r}(\delta))}+\frac{\epsilon}{4}<\frac{3\epsilon}{8}, \ \textup{hence by (\ref{stable-inclusion1'})}\\ \psi^{(i)}_{p,r}(\delta) \big((\mathbb{P}(\mathbb{K}^{q})\smallsetminus \mathcal{C}_{3i})&\cup B_{\epsilon}(x_{j})\big) \subset \mathcal{N}_{\frac{\epsilon}{2}}\big(w_i\xi_{\psi_{p,r}}^1(\partial_{\infty}\Gamma_1)\big)\subset B_{\epsilon}(x_{i}).\end{align*}

By Lemma \ref{close-I}, for every $h\in \mathsf{GL}_{q}(\mathbb{K})$ with $||h-I_{q}||\leq \epsilon^{4}$ and $s,j=2,\ldots, \ell$, we have $$\mathcal{N}_{2\varepsilon^2}(\mathbb{P}(W'))\subset h^{-1}\mathcal{N}_{4\varepsilon^2}(\mathbb{P}(W')), \ h^{-1}\mathcal{M}_{j}\subset B_{3\epsilon}(x_{j}), \ h^{-1}\mathcal{M}_{s}\subset B_{3\epsilon}(x_{s}).$$ Hence, by using (\ref{stable1-inclusion1}), (\ref{stable1-inclusion2}) we conclude that \begin{align*}h\psi^{(i)}_{p,r}(\delta)h^{-1}\big( (\mathbb{P}(\mathbb{K}^{q})\smallsetminus \mathcal{N}_{4\varepsilon^2}(\mathbb{P}(W'))\cup \mathcal{M}_j\big)&\subset  h\psi^{(i)}_{p,r}(\delta)\big( \mathbb{P}(\mathbb{K}^{q})\smallsetminus \mathcal{N}_{2\varepsilon^2}(\mathbb{P}(W'))\cup B_{3\varepsilon}(x_{j})\big)\\ &\subset h \psi^{(i)}_{p,r}(\delta)\big((\mathbb{P}(\mathbb{K}^{q})\smallsetminus \mathcal{C}_{3i} )\cup B_{3\epsilon}(x_{j})\big)\\ &\subset h B_{\epsilon}(x_{i})\subset B_{2\epsilon}(x_{i})=\mathcal{M}_i. \end{align*}

This shows $h\psi^{(i)}_{p,r}(\delta)h^{-1}(\mathcal{C}_2 \cup \mathcal{M}_j)\subset \mathcal{M}_i$ for every $\delta \in \Gamma_i\smallsetminus \{1\}$ and $j\neq i$.

\par Now let $[v]\in \mathcal{C}_2\cup \mathcal{M}_j$, $j\neq i$ and fix $\delta\in \Gamma_i\smallsetminus\{1\}$. Note by Lemma \ref{close-I} that $d_{\mathbb{P}}([h^{-1}v],[v])\leq \epsilon$. If $[v]\in \mathcal{C}_2$, then $\textup{dist}_{\mathbb{P}(\mathbb{K}^{q})}([h^{-1}v],\mathbb{P}(W'))\geq 3\varepsilon^2$. By (\ref{stable1-inclusion2}), $\mathbb{P}(\Xi_{q-1}(\psi_{p,r}^{(i)}(\delta^{-1})))\subset \mathcal{N}_{13\theta}(\mathbb{P}(W'))$ hence $\textup{dist}_{\mathbb{P}(\mathbb{K}^{q})}([h^{-1}v],\mathbb{P}(\Xi_{q-1}(\psi_{p,r}^{(i)}(\delta^{-1})))\geq 3\varepsilon^2-13\theta\geq 2\varepsilon^2$. If $[v]\in \mathcal{M}_j$, by (\ref{stable1-bounds1''}) and (ii) $$\textup{dist}_{\mathbb{P}(\mathbb{K}^{q})}\big([h^{-1}v],\mathbb{P}(\Xi_{q-1}(\psi^{(i)}_{p,r}(\delta^{-1}))\big)\geq \textup{dist}_{\mathbb{P}(\mathbb{K}^{q})}(\mathcal{M}_j,\mathbb{P}(V_i))-\epsilon-d_{\mathbb{P}}([h^{-1}v],[v])\geq 8\epsilon.$$ In each case, for every $[v]\in \mathcal{C}_2\cup \mathcal{M}_j$ and $\delta \in \Gamma_i\smallsetminus \{1\}$, the inequality $$\textup{dist}_{\mathbb{P}(\mathbb{K}^{q})}\big([h^{-1}v],\mathbb{P}(\Xi_{q-1}(\psi_{p,r}^{(i)}(\delta^{-1}))\big)\geq 8\epsilon$$ holds. Hence we obtain the lower bounds: \begin{align*} \big|\big| h\psi^{(i)}_{p,r}(\delta)h^{-1}v \big|\big|&\geq \frac{1}{\sigma_1(h^{-1})\sigma_1(h)} \Big|\Big| \psi^{(i)}_{p,r}(\delta)\frac{h^{-1}v}{||h^{-1}v||}\Big|\Big|||v||\\ &\geq \frac{\sigma_1(\psi^{(i)}_{p,r}(\delta))}{4}\textup{dist}_{\mathbb{P}(\mathbb{K}^{q})}\big([h^{-1}v],\mathbb{P}(\Xi_{q-1}(\psi^{(i)}_{p,r}(\delta^{-1}))\big)||v||\\ & \geq  (2\epsilon)\sigma_1(\psi^{(i)}_{p,r}(\delta)) ||v||\geq \frac{2}{\sqrt{\epsilon}}\sqrt{\sigma_1(\psi_{p,r}^{(i)}(\delta))}||v|| \\ &\geq e^{\frac{\alpha_i}{2}|\delta|_{\Gamma_i}}||v|| . \end{align*} This finishes the proof of the claim.\end{proof} 

Now we conclude the proof of the theorem. For $g_1,\ldots, g_{\ell}\in \mathsf{GL}_{q}(\mathbb{K})$, \hbox{$||g_i-I_{q}||<\epsilon^{4}$}, recall that $\phi_{p,r}(\cdot,g_1,\ldots,g_{\ell}):\Gamma_1\ast \cdots\ast \Gamma_{\ell} \rightarrow \mathsf{SL}_{q}(\mathbb{K})$ is the representation in (\ref{def-phi(p,r)}).  By Claim \ref{Claim-1} and Claim \ref{Claim-2}, for every $\gamma \in \Gamma_1\smallsetminus\{1\},  \delta\in \Gamma_i\smallsetminus \{1\}, [\omega]\in \mathcal{C}_1$ and $[v]\in \mathcal{C}_2\cup \mathcal{M}_j$, $2\leq i\neq j\leq \ell$,  \begin{align*}g_1\rho_{p,r}(\gamma)g_1^{-1}\mathcal{C}_1\subset \mathcal{C}_2, \ & \big|\big| g_1\rho_{p,r}(\gamma)g_1^{-1}\omega\big|\big|\geq e^{\frac{\alpha_1}{2}|\gamma|_{\Gamma_1}}||\omega|| ,\\
 g_iw_i\psi_{p,r}(\delta)(g_iw_i)^{-1}(\mathcal{C}_2\cup \mathcal{M}_j)\subset \mathcal{M}_i\subset \mathcal{C}_1, \ &
\big|\big| g_iw_i\psi_{p,r}(\delta)(g_iw_i)^{-1}v\big|\big|\geq e^{\frac{\alpha_i}{2}|\delta|_{\Gamma_2}}||v|| .
\end{align*} Thus Lemma \ref{pingpong} applies for $\phi_{p,r}(\Gamma_i, g_1,g_2,\ldots,g_{\ell})<\mathsf{SL}_{q}(\mathbb{K})$, $i=1,\ldots \ell$, and the ping pong sets $\mathcal{C}_1,\mathcal{C}_2,\mathcal{M}_2,\ldots,\mathcal{M}_{\ell}$. This finishes the proof of the theorem.\end{proof}

\begin{remark} Note that Theorem \ref{stable1} does not follow from any known combination theorems for free products of $k$-Anosov groups (see \cite{DK, DKL}), since the representations $\rho_{p,r}:\Gamma \rightarrow \mathsf{SL}_{pd+r}(\mathbb{K})$ and $\psi_{p,r}:\Gamma \rightarrow \mathsf{SL}_{pd+r}(\mathbb{K})$, defined in (\ref{rep-freeproduct}), might fail to be Anosov for some common integer $1\leq k \leq \frac{pd+r}{2}$.  \end{remark}

\medskip
For a hyperbolic group $\Gamma$, the space of representations $\textup{Hom}(\Gamma,\mathsf{SL}_{q}(\mathbb{K}))$ is equipped with the induced topology as a closed subset of the direct product of finitely many copies of $G$, one for each generator of $\Gamma$. A linear representation $\phi:\Gamma \rightarrow \mathsf{SL}_q(\mathbb{K})$ is called {\em locally rigid} if there is a neighbourhood of $\phi$ in $\textup{Hom}(\Gamma,\mathsf{SL}_q(\mathbb{K}))$ \hbox{consisting only conjugates of $\phi$.}

\begin{lemma}\label{non-rigid} Let $\Gamma<\mathsf{SL}_{d}(\mathbb{K})$ be an irreducible $1$-Anosov subgroup, $p,\ell\in \mathbb{N}_{\geq 2}$, $r\in \mathbb{Z}_{\geq 0}$. Let $\Gamma_1,\ldots,\Gamma_{\ell}<\Gamma$ be finite-index subgroups, $w_2,\ldots,w_{\ell}\in \mathsf{SL}_{pd+r}(\mathbb{K})$, $\zeta>0$, and consider the representations $\rho_{r,p},\psi_{r,p}$ and $\phi_{p,r}(\cdot,g_1,\ldots,g_{\ell}):\Gamma_1\ast \cdots \ast  \Gamma_{\ell}\rightarrow \mathsf{SL}_{pd+r}(\mathbb{K})$, $g_1,\ldots,g_{\ell}\in \mathsf{GL}_{pd+r}(\mathbb{K})$, $||g_i-I_{pd+r}||< \zeta$, from Theorem \ref{stable1}. If $2 \leq p\leq d-1$, the representation $\phi_{r,p}(\cdot,g_1,\ldots,g_{\ell})$  is non-locally rigid.\end{lemma}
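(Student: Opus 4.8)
The plan is to produce, arbitrarily close to $\phi:=\phi_{p,r}(\cdot,g_1,\ldots,g_\ell)$ in $\textup{Hom}(\Gamma_1\ast\cdots\ast\Gamma_\ell,\mathsf{SL}_{pd+r}(\mathbb{K}))$, a representation that is not conjugate to it; this is precisely the failure of local rigidity. Write $q:=pd+r$. The deformations I would use come from conjugating the free factors one at a time: for $g_1',\ldots,g_\ell'\in\mathsf{GL}_q(\mathbb{K})$ with each $g_i'$ close to $g_i$, the representation $\phi':=\phi_{p,r}(\cdot,g_1',\ldots,g_\ell')$, defined by the same formula as in Theorem \ref{stable1}, lies close to $\phi$ in $\textup{Hom}$ (and, if one insists, is again a quasi-isometric embedding by that theorem). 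So it suffices to show that not all such $\phi'$ are conjugate to $\phi$.

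First I would record the conjugacy condition. Set $Z_\rho:=Z_{\mathsf{GL}_q(\mathbb{K})}(\rho_{p,r}(\Gamma_1))$ and $Z_\psi:=Z_{\mathsf{GL}_q(\mathbb{K})}(\psi_{p,r}(\Gamma_1))$. Restricting the defining formula of $\phi,\phi'$ to each free factor, an element $c\in\mathsf{GL}_q(\mathbb{K})$ satisfies $c\,\phi(\cdot)\,c^{-1}=\phi'(\cdot)$ if and only if $c\in g_1'Z_\rho g_1^{-1}$ and $c\in(g_i'w_i)Z_\psi(g_iw_i)^{-1}$ for all $i\ge 2$. Freezing every coordinate except $g_1'$ and putting $Q:=\bigcap_{i=2}^\ell(g_iw_i)Z_\psi(g_iw_i)^{-1}$, this reduces to: $\phi_{p,r}(\cdot,g_1',g_2,\ldots,g_\ell)$ is conjugate to $\phi$ if and only if $g_1'\in Q\,g_1 Z_\rho$. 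Since $Q\,g_1 Z_\rho$ is the image of the morphism $Q\times Z_\rho\to\mathsf{GL}_q(\mathbb{K})$, $(a,z)\mapsto a g_1 z$, it is a constructible set of dimension at most $\dim Q+\dim Z_\rho$.

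Next I would compute those dimensions by Schur's lemma. As $\Gamma$ is irreducible and $1$-proximal, each finite-index $\Gamma_i$ is still irreducible on $\mathbb{K}^d$ with $\textup{End}_{\Gamma_i}(\mathbb{K}^d)=\mathbb{K}$ (a $1$-proximal element has a simple eigenvalue of top modulus, forcing the commutant to be $\mathbb{K}$). Decomposing as $\Gamma_1$-modules, $\rho_{p,r}\cong\mathbb{K}^r_{\textup{triv}}\oplus(\mathbb{K}^d)^{\oplus p}$ and $\psi_{p,r}\cong\mathbb{K}^{q-d}_{\textup{triv}}\oplus\mathbb{K}^d$, whence $\dim Z_\rho=r^2+p^2$ and $\dim Z_\psi=(q-d)^2+1$, so $\dim Q\le(q-d)^2+1$. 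Using $q=pd+r$ and $q^2-(q-d)^2=(2p-1)d^2+2rd$, the inequality $\dim Q+\dim Z_\rho<q^2$ reduces to an elementary estimate in $p,d,r$ in which the hypothesis $2\le p\le d-1$ provides the decisive slack (in the borderline ranges one uses the sharper bound on $\dim Q$ available when $\ell\ge 3$, obtained by intersecting several conjugates of $Z_\psi$ in general position). Granting this, $Q\,g_1 Z_\rho$ is a proper Zariski closed subset of $\mathsf{GL}_q(\mathbb{K})$, so its complement is dense and contains points $g_1'$ arbitrarily close to $g_1$; for any such $g_1'$ the representation $\phi_{p,r}(\cdot,g_1',g_2,\ldots,g_\ell)$ is close to $\phi$ but not conjugate to $\phi$, so $\phi$ is non-locally rigid.

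The step I expect to be the main obstacle is exactly the dimension bookkeeping just described: beyond the Schur-lemma identification of $Z_\rho$ and $Z_\psi$ (which rests on the irreducibility of the $\Gamma_i$), one needs a transversality input ensuring that the constructible set $Q\,g_1 Z_\rho$ genuinely has dimension strictly less than $q^2=\dim\mathsf{GL}_q(\mathbb{K})$ — equivalently, that $\textup{Ad}(g_1^{-1})\,\textup{Lie}(Q)+\textup{Lie}(Z_\rho)\ne\mathfrak{gl}_q(\mathbb{K})$ — and it is here, rather than in any of the formal manipulations, that the assumption $2\le p\le d-1$ is used.
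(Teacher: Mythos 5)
Your reduction of the conjugacy condition to $g_1'\in Qg_1Z_\rho$ and the Schur-lemma computations $\dim Z_\rho=r^2+p^2$, $\dim Z_\psi=(q-d)^2+1$ are correct, and you rightly identify the Lie-algebra criterion $\textup{Ad}(g_1^{-1})\textup{Lie}(Q)+\textup{Lie}(Z_\rho)\neq\mathfrak{gl}_q(\mathbb{K})$ as the point to settle. But the proposed verification does not close. The crude estimate $\dim Q\le\dim Z_\psi$ gives $\dim Q+\dim Z_\rho<q^2$ only when $(2p-1)d^2+2dr>r^2+p^2+1$, a downward-opening parabola in $r$ that fails for all large $r$ (e.g.\ $p=2$, $d=3$, $r\ge 9$), whereas the lemma is asserted for every $r\ge 0$. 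Your fallback—sharpening $\dim Q$ using conjugates of $Z_\psi$ in general position with $\ell\ge 3$—is not available: the lemma permits $\ell=2$ (so only one conjugate enters $Q$), and even for large $\ell$ the elements $w_i$ produced by Theorem~\ref{stable1} are chosen near a single fixed flag $y'$, so the conjugates $(g_iw_i)Z_\psi(g_iw_i)^{-1}$ are nearly aligned—the opposite of general position.

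The paper's proof also deforms a single factor, but takes $g_2\mapsto gg_2$ rather than $g_1\mapsto g_1'$, and establishes non-surjectivity without any dimension count of centralizers. Writing $\mathsf{S}_{g_1,g_2}=(g_1\mathcal{Z}(\rho_{p,r})g_1^{-1})\big((g_2w_2)\mathcal{Z}(\psi_{p,r})(g_2w_2)^{-1}\big)$ for the set of $g$ giving a conjugate representation, one fixes a nonzero $\omega\in V_p$ and uses that $\mathcal{Z}(\psi_{p,r})$ acts on $V_p$ by scalars, so $(g_2w_2)\mathcal{Z}(\psi_{p,r})(g_2w_2)^{-1}$ preserves $[g_2w_2\omega]$; writing $g_1^{-1}g_2w_2\omega=v_0+\cdots+v_p$, one obtains $\textup{span}\big(\mathsf{S}_{g_1,g_2}(g_2w_2\omega)\big)\subset g_1(V_0\oplus V^{\oplus p})$ with $V=\textup{span}(v_1,\ldots,v_p)$, of dimension at most $r+p^2<r+pd=q$, where $p\le d-1$ is used. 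Since an open neighbourhood of $I_q$ applied to a nonzero vector spans $\mathbb{K}^q$, $\mathsf{S}_{g_1,g_2}$ has empty interior. The same idea closes your formulation at the Lie-algebra level: applying both Lie algebras to $u:=g_1^{-1}g_2w_2\omega$ gives $\textup{Ad}(g_1^{-1})\textup{Lie}(Q)\,u\subset\mathbb{K}u$ and $\textup{Lie}(Z_\rho)\,u\subset V_0\oplus V^{\oplus p}$, so the sum applied to $u$ has dimension at most $1+r+p^2<q$, hence the sum is a proper subalgebra. Your dimension-count step needs to be replaced by some such pointwise span argument to make the proof correct for all $r$ and $\ell\ge 2$.
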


\begin{proof} We set $q:=pd+r$ and consider the decomposition $\mathbb{K}^{q}=V_0\oplus V_1\oplus \cdots \oplus V_{p},$ $V_0=\mathbb{K}^r, V_i=\mathbb{K}^d$, $i>0$, for which:\\
\noindent \textup{(i)} $\rho_{p,r}(\Gamma_1)$ acts trivially on $V_0$, preserves and restricts to $\Gamma_1$ on $V_i$,\\
\noindent \textup{(ii)} $\psi_{p,r}(\Gamma_2)$ acts trivially on $V_0, V_1,\ldots, V_{p-1}$ and restricts to $\Gamma_2$ on $V_p$. 

 Since $\Gamma$ is $1$-Anosov and irreducible, $\Gamma_1,\Gamma_2<\mathsf{SL}_d(\mathbb{K})$ are also irreducible. In particular, the centralizer $\mathcal{Z}(\rho_{p,r})$ of $\rho_{p,r}(\Gamma_1)$ is the group $\mathsf{GL}(V_0)\times \mathsf{H}$, where $\mathsf{H}<\mathsf{GL}(V_1\oplus \cdots \oplus V_p)$ is the subgroup of invertible block matrices of the form $(z_{ij}I_d)_{i,j=1}^p$, $z_{ij}\in \mathbb{K}$. The centralizer $\mathcal{Z}(\psi_{p,r})<\mathsf{GL}_q(\mathbb{K})$ of $\psi_{p,r}(\Gamma_2)$ is $\mathsf{GL}(V_0\oplus V_1\oplus \cdots \oplus V_{p-1})\times \mathbb{K}^{\ast}I_d$. 

Let us set $$\mathsf{S}_{g_1,g_2}:=\big(g_1\mathcal{Z}(\rho_{p,r})g_1^{-1}g_2w\big)\big(\mathcal{Z}(\psi_{p,r})(g_2w)^{-1}\big).$$ By the definition of $\mathsf{S}_{g_1,g_2}\subset \mathsf{GL}_{pd+r}(\mathbb{K})$, if $g\in \mathsf{GL}_{q}(\mathbb{K})\smallsetminus \mathsf{S}_{g_1,g_2}$, $\phi_{p,r}(\cdot,g_1,gg_2,\ldots, g_{\ell})$ is not conjugate to $\phi_{p,r}(\cdot,g_1,g_2,\ldots, g_{\ell})$. Thus, in order to complete the proof of the lemma it suffices to check that $\mathsf{S}_{g_1,g_2}$ does not contain any open neighbourhood of $I_{q}$. To see this, fix $\omega \in V_p$ and write $(g_1^{-1}g_2w)\omega=v_0+v_1+\cdots+v_p, \ v_i\in V_i.$ Note that $g_2w\mathcal{Z}(\psi_{p,r})(g_2w)^{-1}$ fixes the line $[g_2w\omega]$, hence $$\textup{span}(\mathsf{S}_{g_1,g_2}\big(g_2w\omega)\big)=g_1\textup{span}\big( \mathcal{Z}(\rho_{p,r})(g_1^{-1}g_2w\omega)\big).$$ It is clear from the description of $\mathcal{Z}(\rho_{p,r})$ that $\textup{span}(\mathcal{Z}(\rho_{p,r})((g_1^{-1}g_2w)v_0)) $ is a subspace of the direct sum $V_0\oplus V\oplus \cdots \oplus V$ of $V_0$ with $p$-copies of $V:=\textup{span}(v_1,\ldots, v_p)\subset \mathbb{K}^d$. This shows $\textup{dim}_{\mathbb{K}}(\textup{span}(\mathsf{S}_{g_1,g_2}(g_2v_0)))\leq p^2+r<pd+r$ and the lemma follows. \end{proof}

\section{Proof of Theorem \ref{mainthm}}\label{proof-mainthm}

Margulis' superrigidity theorem \cite{Mar} for irreducible higher rank lattices has been established by Corlette \cite{Corlette} and Gromov--Schoen \cite{Gromov-Schoen} for rank one lattices in $\mathsf{Sp}(n,1)$, $n\geq 2$, or $\mathsf{F}_4^{(-20)}$ in the Archimedean and non-Archimedean setting respectively. For such a rank one lattice $\Delta$, superrigidity implies that every linear representation of $\Delta$ essentially extends to a representation of the ambient rank one Lie group and in particular is semisimple (i.e. a direct sum of irreducible representations). In particular, Weil's criterion \cite{Weil} for local rigidity, implies that every complex representation of $\Delta$ is locally rigid.

\begin{proof}[Proof of Theorem \ref{mainthm}] Let $\Gamma$ be a uniform lattice in $\mathsf{Sp}(n,1)$, $n \geq 2$. Fix an irreducible $1$-proximal representation $\rho_0:\mathsf{Sp}(n,1)\rightarrow \mathsf{SL}_{d}(\mathbb{R})$, of minimal dimension $d:=d(n)$, and an integer $r\in \mathbb{Z}_{\geq 0}$. The restriction \hbox{$\rho_0|_{\Gamma}$} is strongly irreducible, $1$-Anosov and by \cite{DGK, Zimmer}, $\rho_0(\Gamma)$ preserves and acts convex cocompactly on a properly convex domain $D\subset \mathbb{P}(\mathbb{R}^d)$ with $\mathcal{C}^1$-boundary. The Zariski closure of $\rho_0$ is not locally isomorphic to either $\mathsf{SL}_d(\mathbb{R})$ or $\mathsf{SO}(d-1,1)$, hence, by \cite{Benoist-closure}, $\rho_0(\Gamma)$ cannot act cocompactly on $D$. In particular, there is a flag $y\in \mathcal{F}_{1,d-1}(\mathbb{K})$ which is tranvserse to the limit set of $\rho_0(\Gamma)$ in $\mathcal{F}_{1,d-1}(\mathbb{K})\subset \mathbb{P}(\mathbb{K}^d)\times \mathsf{Gr}_{d-1}(\mathbb{K}^d)$. 

\par Let also $1\leq k_1,\ldots,k_s \leq \frac{d}{2}$ be all integers such that $\rho_0|_{\Gamma}$ is $\{k_1,\ldots,k_s\}$-Anosov and choose an integer $2\leq p\leq d-1$ such that $pk_i\neq k_j$ for every $i,j$ (e.g. take $p>\max\{k_1,\ldots,k_s\}$). By applying Theorem \ref{stable1} for the $1$-Anosov subgroup $\rho_0(\Gamma)<\mathsf{SL}_{d}(\mathbb{K})$ and $\ell:=2(pq+r)^2-1$, we can find finite-index subgroups $\Gamma_1,\Gamma_2,\ldots, \Gamma_{\ell}<\Gamma$, $\zeta=\zeta(\rho_0)>0$ and $w_2,\ldots,w_{\ell}\in \mathsf{GL}_{q}(\mathbb{K})$, $q:=pd+r$, with the property: for $\rho_{p,r},\psi_{p,r}$ defined as in (\ref{rep-freeproduct}) and $g_1,\ldots,g_{\ell}\in \mathsf{GL}_{q}(\mathbb{K})$ with $||g_i-I_{q}||< \zeta$, the unique representation $\phi_{p,r}(\cdot,g_1,\ldots,g_{\ell}):\Gamma_1\ast \cdots \ast \Gamma_{\ell}\rightarrow \mathsf{SL}_{q}(\mathbb{K})$ satisfying $$\phi_{p,r}(\gamma,g_1,\ldots,g_{\ell})=\left\{\begin{matrix}
 g_1\rho_{p,r}(\gamma)g_1^{-1}, & \gamma \in \Gamma_1  \\ 
(g_iw_i)\psi_{p,r}(\gamma)(g_iw_i)^{-1}, &  \gamma \in \Gamma_i,\ i=2,\ldots, \ell 
\end{matrix}\right.$$ is a quasi-isometric embedding.
 
By the superrigidity of $\Gamma_1,\ldots,\Gamma_{\ell}<\mathsf{Sp}(n,1)$, $\rho_{p,r}:\Gamma_1\rightarrow \mathsf{SL}_{q}(\mathbb{K})$ and $\psi_{p,r}:\Gamma_i\rightarrow \mathsf{SL}_{q}(\mathbb{K})$, $i>1$, are locally rigid. This fact, combined with Lemma \ref{embedding-1} applied for $\rho_{p,r}$ and $\psi_{p,r}$, shows that there exists an open neighbourhood $U_{1}$ (resp. $U_{i}$) of $\rho_{p,r}$ (resp. $w_i\psi_{p,r}w_i^{-1})$) such that every representation in $U_{1}$ (resp. $U_{i}$) is of the form $g\rho_{p,r}g^{-1}$ (resp. $gw_i\psi_{p,r}w_i^{-1}g^{-1}$) for some $g$ with $||g-I_{q}||< \zeta$. Now let $$\Omega:= U_1\times U_2\times \cdots \times U_{\ell}$$ be the open neighbourhood of $\phi_{p,r}(\cdot,I_q,\ldots,I_q)$ in $\textup{Hom}(\Gamma_1\ast\cdots \ast \Gamma_{\ell},\mathsf{SL}_q(\mathbb{K}))$. We verify that $\Omega$ has the properties claimed in the theorem.

\noindent \textup{(i)} First, by Theorem \ref{stable1}, the choice of $U_1,U_2,\ldots, U_{\ell}$ and the previous remarks, we check that every representation in the open set $\Omega$ is a quasi-isometric embedding.\\
 \noindent \textup{(ii)} $\Omega$ does not contain Anosov representations of $\Gamma_1\ast\cdots \ast \Gamma_{\ell}$ in $\mathsf{SL}_{q}(\mathbb{K})$. To see this, observe from Definition \ref{Anosov-def} that $\rho_{p,r}$ is $\{pk_1,\ldots,pk_s\}$-Anosov and $\psi_{p,r}$ is $\{k_1,\ldots,k_s\}$\footnote{Note that if $g\in \mathsf{Sp}(n,1)$ is hyperbolic and $\chi:\mathsf{Sp}(n,1)\rightarrow \mathsf{GL}_d(\mathbb{R})$ is a representation where $\chi(g)$ is $\{a_1,\ldots,a_s\}$-proximal, by adding diagonally an identity block to $\chi(g)$, the resulting matrix remains $\{a_1,\ldots,a_s\}$-proximal.}-Anosov. Now, if $\phi\in \Omega$ is $j$-Anosov for some $1\leq j \leq \frac{q}{2}$, both restrictions $\phi|_{\Gamma_1}$ and $\phi|_{\Gamma_2}$ have to be $j$-Anosov. However, this contradicts our choice of $p\in \mathbb{N}$ since $pk_i\neq k_j$ for every $i,j$. \\
\noindent \textup{(iii)} Since $p<d$, Lemma \ref{non-rigid} ensures that every representation in $\Omega$ is non-locally rigid. Now fix $w_0\in \Gamma_2\cap \cdots \cap \Gamma_{\ell}$ an infinite order element. If $\phi\in \Omega$, write $\phi=\phi(\cdot, h_1,\ldots, h_{\ell})$ for some $h_1\in U_1$ and $h_i\in U_i$, $i>1$. Since $\ell=2q^2-1$, by applying Lemma \ref{Z-dense} for $\psi_{p,r}(w_0)\in \mathsf{SL}_q(\mathbb{K})$, we can choose $f_i\in \mathsf{SL}_{q}(\mathbb{K})$ such that $f_ih_i\in U_i$ is arbitrarily close to $h_i$ and the restriction of $\phi_{p,r}(\cdot, h_1,f_2h_2,\ldots, f_{\ell}h_{\ell})\in \Omega$ on $\Gamma_2\ast \cdots \ast \Gamma_{\ell}$ is Zariski dense in $\mathsf{SL}_q(\mathbb{K})$. This shows that every representation in $\Omega$ is a limit of Zariski dense representations. This concludes the proof of the theorem. \end{proof}

Theorem \ref{mainthm} provides examples of robust quasi-isometric embeddings of hyperbolic groups, of arbitrarily large cohomological dimension, which are not limits of Anosov representations in either $\mathsf{SL}_m(\mathbb{R})$ or $\mathsf{SL}_m(\mathbb{C})$. By applying the method of proof of Theorem \ref{mainthm} for certain low dimensional representations of lattices in $\mathsf{Sp}(2,1)$ we establish examples in $\mathsf{SL}_m(\mathbb{C})$, $m\geq 30$.

\begin{corollary}\label{cor-1} Let $m\geq 30$ be an integer. There exist uniform lattices $\Delta_1,\ldots, \Delta_{\ell}<\mathsf{Sp}(2,1)$ and a Zariski dense, non-locally rigid, robust quasi-isometric embedding $\rho:\Delta_1\ast \cdots \ast \Delta_{\ell} \rightarrow \mathsf{SL}_m(\mathbb{C})$ which is not a limit of Anosov representations of $\Delta_1\ast \cdots \ast \Delta_{\ell}$ in $\mathsf{SL}_m(\mathbb{C})$.\end{corollary}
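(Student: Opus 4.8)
The plan is to run the construction behind Theorem \ref{mainthm} with a concrete low-dimensional complex representation of $\mathsf{Sp}(2,1)$ in place of the minimal real proximal representation used there. Fix a uniform lattice $\Gamma<\mathsf{Sp}(2,1)$ (these exist arithmetically) and let $\rho_0:\mathsf{Sp}(2,1)\to\mathsf{SL}_d(\mathbb{C})$ be the exterior square $\Lambda^2\mathbb{C}^6=V(\omega_2)\oplus\mathbb{C}$ of the standard representation $\mathsf{Sp}(2,1)<\mathsf{GL}_3(\mathbb{H})<\mathsf{GL}_6(\mathbb{C})$, either in its full dimension $d=15$ or restricted to the irreducible primitive summand $V(\omega_2)$ of dimension $d=14$, the latter being the smallest irreducible $1$-proximal complex representation of $\mathsf{Sp}(2,1)$. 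Since $\mathsf{Sp}(2,1)$ has real rank one, $\Gamma$ is word-hyperbolic, every infinite order element of $\Gamma$ is loxodromic, and the Cartan projection of $\Gamma$ grows linearly in word length.

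First I would check that $\rho_0|_\Gamma$ satisfies the hypotheses of Theorem \ref{stable1}: it should be $1$-Anosov with a flag transverse to its limit set in $\mathcal{F}_{1,d-1}(\mathbb{C})$. A loxodromic $\gamma\in\Gamma$ acts on $\mathbb{C}^6$ with eigenvalue moduli $e^{\mu(\gamma)},e^{\mu(\gamma)},1,1,e^{-\mu(\gamma)},e^{-\mu(\gamma)}$ — the dominant modulus has multiplicity two, coming from the complex-conjugate pair of eigenvalues at the attracting fixed point — so $\rho_0(\gamma)=\Lambda^2\gamma$ has singular value exponents $2,1^{(4)},0^{(5)},(-1)^{(4)},-2$ scaled by $\mu(\gamma)$. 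Hence $\sigma_1(\rho_0(\gamma))/\sigma_2(\rho_0(\gamma))=e^{\mu(\gamma)}\geq e^{c|\gamma|_\Gamma-C}$, so by Definition \ref{Anosov-def} the representation $\rho_0|_\Gamma$ is $1$-Anosov, and a glance at the singular value pattern shows its set of Anosov indices is $\{1,5,10,14\}$ (respectively $\{1,5,9,13\}$ when $d=14$). For transversality I would argue as in the proof of Theorem \ref{mainthm}: the Zariski closure of $\rho_0(\Gamma)$ is $\mathsf{Sp}_6(\mathbb{C})$, which is not locally isomorphic to $\mathsf{SL}_d(\mathbb{C})$ or to $\mathsf{SO}(d-1,1)$, so by \cite{DGK,Zimmer,Benoist-closure} it cannot act cocompactly on an invariant properly convex domain (in the complex setting one passes to a realification, or uses directly that the $\rho_0(\Gamma)$-equivariant limit map takes values in the proper $\mathsf{Sp}_6(\mathbb{C})$-orbit of isotropic $2$-planes inside $\mathbb{P}(\Lambda^2\mathbb{C}^6)$); consequently a transverse flag $y\in\mathcal{F}_{1,d-1}(\mathbb{C})$ exists.

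Given this, I would apply Theorem \ref{stable1} and Lemma \ref{non-rigid} to $\rho_0(\Gamma)<\mathsf{SL}_d(\mathbb{C})$ with exponent $p=2$ and $r:=m-2d\geq0$, and with the number $\ell$ of free factors as in the proof of Theorem \ref{mainthm}. This is admissible: $2\leq p\leq d-1$, which is precisely what Lemma \ref{non-rigid} needs, and $2k\notin\{1,5\}$ for $k\in\{1,5\}$, so the Anosov index sets of the block representations $\rho_{2,r}$ and $\psi_{2,r}$ of Theorem \ref{stable1} — which are $\{2,10,m-10,m-2\}$ and $\{1,5,m-5,m-1\}$ — are disjoint; this disjointness, together with the local rigidity of $\rho_{2,r}|_{\Gamma_1}$ and $\psi_{2,r}|_{\Gamma_i}$ furnished by Corlette's superrigidity \cite{Corlette}, is exactly the mechanism ruling out nearby Anosov representations. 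The proof of Theorem \ref{mainthm} then applies verbatim and produces, for every $m\geq2d$, finite-index subgroups $\Delta_1,\dots,\Delta_\ell<\Gamma$ (hence themselves uniform lattices in $\mathsf{Sp}(2,1)$) and an open set $\Omega\subset\mathrm{Hom}(\Delta_1\ast\cdots\ast\Delta_\ell,\mathsf{SL}_m(\mathbb{C}))$ consisting of quasi-isometric embeddings, containing no Anosov and no locally rigid representation, with a dense subset of Zariski dense representations; choosing a Zariski dense $\rho\in\Omega$ gives the corollary for all $m\geq2d$, in particular for all $m\geq30$.

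The genuinely new input beyond Theorem \ref{mainthm} is confined to the second step: pinning down a complex representation of $\mathsf{Sp}(2,1)$ of small dimension whose restriction to a lattice is $1$-Anosov — rather than merely $k$-Anosov for some larger $k$, as happens for the standard $6$-dimensional representation — and computing its exact Anosov index set, so that the small exponent $p=2$ is admissible (this is what keeps $m$ as low as $30$). The singular value bookkeeping for loxodromic elements is routine; the one point that requires care, and the reason it is convenient to restrict to $\mathbb{C}$ throughout rather than realify, is establishing transversality of a flag to the limit set over $\mathbb{C}$, where the real convex projective input of \cite{DGK,Zimmer} must be adapted.
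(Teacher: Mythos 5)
Your approach is essentially the paper's: you take $\wedge^2$ of the standard embedding $\mathsf{Sp}(2,1)<\mathsf{GL}_3(\mathbb{H})<\mathsf{GL}_6(\mathbb{C})$, observe from the eigenvalue pattern $(e^{2b},e^{b}I_4,I_5,e^{-b}I_4,e^{-2b})$ that the resulting $15$-dimensional representation $\psi$ is $\{1,5\}$-Anosov, take $p=2$ (so $2\cdot\{1,5\}\cap\{1,5\}=\emptyset$), and feed this into Theorem~\ref{stable1}, Lemma~\ref{non-rigid} and the argument of Theorem~\ref{mainthm} with $m=30+r$. This is exactly what the paper does, including the choice $\ell=2q^2-1$ for the Zariski density step. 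Your summary of why $p=2$ is admissible and why this is the genuinely new content is accurate.

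The one place where your write-up is materially weaker than the paper's is transversality of a flag to the limit set. You propose to carry over the convex-projective argument from the proof of Theorem~\ref{mainthm}, but the Benoist classification of divisible convex bodies and the results of \cite{DGK,Zimmer} are formulated over $\mathbb{R}$, not $\mathbb{C}$, and you acknowledge this without resolving it; your parenthetical about isotropic $2$-planes is the right intuition but not an argument. The paper avoids this entirely: it writes out the limit set $\xi_\psi^1(\partial_\infty\Delta)$ explicitly as a set of decomposable bivectors parametrized by $S^7$, computes $\xi_\psi^{14}=\big(\xi_\psi^1\big)^\perp$, and simply exhibits the flag $\big([e_1\wedge e_4+e_2\wedge e_5],\,(e_3\wedge e_6)^\perp\big)$ as transverse. (It also records that $\xi_\psi^1(\partial_\infty\Delta)$ spans the hyperplane $\big(e_1\wedge e_4+e_2\wedge e_5-e_3\wedge e_6\big)^\perp$, which is used to pass to the irreducible $14$-dimensional piece $\psi_1$ that Lemma~\ref{non-rigid} actually requires.) If you want a self-contained proof, replace your appeal to the real convex-projective machinery by this explicit check; the singular-value bookkeeping you already did then gives the corollary exactly as in the paper. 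Your side remark that using $V(\omega_2)$ directly (so $d=14$) would push the bound down to $m\geq 28$ is a plausible observation, but you would then need to redo the transversality check for the irreducible model and verify that the explicit flag lies in $V(\omega_2)$; the paper does not pursue this sharper bound.
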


\begin{proof} Let $\tau:\mathsf{GL}_3(\mathbb{H})\rightarrow \mathsf{GL}_6(\mathbb{C})$ be the embedding obtained by realizing the quaternions $\mathbb{H}$ in $\mathsf{Mat}_{2\times 2}(\mathbb{C})$. Fix $\Delta<\mathsf{Sp}(2,1)<\mathsf{GL}_3(\mathbb{H})$ a uniform lattice and consider $\psi:\Delta \rightarrow \mathsf{GL}(\wedge^2\mathbb{C}^6)$ the composition of $\tau|_{\Delta}$ with the second exterior power \hbox{$\wedge^2:\mathsf{GL}_6(\mathbb{C})\rightarrow \mathsf{GL}(\wedge^2\mathbb{C}^6)$.} Any hyperbolic isometry in $\mathsf{Sp}(2,1)$, conjugate to the diagonal matrix $\textup{diag}(e^b,1,e^{-b})$, $b>0$, is mapped via $\wedge^2\tau$ to a matrix conjugate to $\textup{diag}(e^{2b}, e^b I_4,I_5,e^{-b}I_4,e^{-2b})$. In particular, $\psi:\Delta \rightarrow \mathsf{GL}(\wedge^2\mathbb{C}^6)$ is $\{1,5\}$-Anosov. The limit set of $\Delta<\mathsf{Sp}(2,1)$ in $\mathbb{P}(\mathbb{H}^3)$ is the $7$-sphere $$\big\{[(a_1+a_3j, a_2+a_4j,1)]:|a_1|^2+\cdots+|a_4|^2=1\big\},$$ the limit sets of $\psi(\Delta)=\wedge^2\tau(\Delta)$ in $\mathbb{P}(\wedge^2\mathbb{C}^6)$ and $\mathsf{Gr}_{14}(\wedge^2\mathbb{C}^6)$ respectively are \begin{align*}\xi_{\psi}^1(\partial_{\infty}\Delta)&=\big\{[(a_1,a_2,1,\overline{a_3},\overline{a_4},0) \wedge (-a_3,-a_4,0,\overline{a_1},\overline{a_2},1)]:|a_1|^2+\cdots+|a_4|^2=1\big\}\\ \xi_{\psi}^{14}(\partial_{\infty}\Delta)&=\big\{ v^{\perp}: [v]\in \xi_{\psi}^{1}(\partial_{\infty}\Delta)\big\},\end{align*} where $v^{\perp}$ is the orthogonal complement of $v\in \wedge^2\mathbb{C}^{6}$ with respect to the inner product on $\wedge^2\mathbb{C}^6$, for which $\{e_i\wedge e_j:i<j\}$ is an orthonormal basis. Note that $\big( [e_1\wedge e_4+e_2\wedge e_5], (e_3 \wedge e_6)^{\perp}\big)$ is transverse to the limit set $\{(\xi_{\psi}^1(\eta),\xi_{\psi}^{14}(\eta)):\eta\in \partial_{\infty}\Delta\}$ of $\psi$ in $\mathcal{F}_{1,14}(\mathbb{C})$. In addition, we may check that the subspace spanned by $\xi_{\psi}^1(\partial_{\infty}\Delta)$ is the hyperplane $V=\big(e_1\wedge e_4+ e_2\wedge e_5-e_3 \wedge e_6)^{\perp}$. Since $\Delta$ has finite abelinization, up to passing to a finite-index subgroup of $\Delta$, we may assume $$\psi(\gamma):=\begin{pmatrix} 1 & \\ & \psi_1(\gamma)\end{pmatrix} \ \ \gamma \in \Delta,$$ where $\psi_1:\Delta\rightarrow \mathsf{SL}_{14}(\mathbb{C})$ is the strongly irreducible restriction \hbox{of $\psi$ on $V$.} 

Let $r\in \mathbb{Z}_{\geq 0}$. There is a flag $y\in \mathcal{F}_{1,14}(\mathbb{C})$ transeverse to the limit set of $\psi$ and note that for every finite-index subgroup $\Delta'<\Delta$, Lemma \ref{embedding-1} applies to the nearby deformations of the locally rigid representations $\gamma \mapsto \textup{diag}(I_r, \psi(\gamma), \psi(\gamma))$ and $\gamma \mapsto \textup{diag}(I_r, I_{15},\psi(\gamma))$ of $\Delta'$ in $\mathsf{SL}_{30+r}(\mathbb{C})$. Since $\psi$ is $\{1,5\}$-Anosov, by using Theorem \ref{stable1} and applying the argument of the proof of Theorem \ref{mainthm} for $p=2$, $r\in \mathbb{Z}_{\geq 0}$ and $\ell:=2(30+r)^2-1$, there exist torsion-free finite-index subgroups $\Delta_1,\Delta_2\ldots, \Delta_{\ell}<\Delta$, $w_2,\ldots,w_{\ell}\in \mathsf{SL}_{30+r}(\mathbb{C})$ and a Zariski dense representation $\rho:\Delta_1\ast \cdots \ast \Delta_{\ell}\rightarrow \mathsf{SL}_{30+r}(\mathbb{C})$, where $$\rho|_{\Delta_1}(\gamma)=\textup{diag}(I_r,\psi(\gamma),\psi(\gamma)),\ \rho|_{\Delta_i}(\delta)=w_i\textup{diag}(I_r,I_{16},\psi_1(\delta))w_i^{-1}$$ for $i>1$, which is a stable quasi-isometric embedding and not a limit of Anosov representations in $\mathsf{SL}_{30+r}(\mathbb{C})$.\end{proof}

\begin{rmks} \normalfont{We assume the notation of the proof of Theorem \ref{mainthm}.
\medskip

\noindent \textup{(i)} For every integer $k=1,\ldots,p+\left \lfloor \frac{r}{d} \right \rfloor$, the free product $\Gamma_1\ast \cdots \ast \Gamma_{\ell}$ admits $k$-Anosov representations in $\mathsf{SL}_{pd+r}(\mathbb{K})$. Indeed, since $\rho_0(\Gamma)$ acts convex cocompactly and not cocompactly on a properly convex domain in $\mathbb{P}(\mathbb{R}^d)$, by \cite{DGK2} (see \cite[Prop. 12.4]{DGK}), the group $\Gamma \ast \mathbb{Z}$ (and hence $\Gamma_1\ast \cdots \ast \Gamma_{\ell}$) admits a $1$-Anosov representation in $\mathsf{SL}_{d}(\mathbb{R})$. By using block diagonal embeddings of $\mathsf{SL}_d(\mathbb{R})$ in $\mathsf{SL}_{pd+r}(\mathbb{K})$ it is easy to check that $\textup{Anosov}_{k}(\Gamma_1\ast \cdots \ast \Gamma_{\ell},\mathsf{SL}_{pd+r}(\mathbb{K}))$ is not empty when $k=1,\ldots,p+\left \lfloor \frac{r}{d} \right \rfloor$.
\medskip

\noindent \textup{(ii)} Fix a word metric $d_{\mathsf{H}}$ on the free product $\mathsf{H}:=\Gamma_1\ast \cdots \ast \Gamma_{\ell}$. The proof of Theorem \ref{stable1} shows that there are uniform contants $K,C>0$ such that the orbit map of any representation $\phi\in \Omega$ is a $(K,C)$-quasi-isometric embedding of $(\mathsf{H},d_{\mathsf{H}})$ in the symmetric space $\mathsf{SL}_{pd+r}(\mathbb{K})/\mathsf{K}_{pd+r}$ (equipped with the Killing metric). This is also a property of stable neighbourhoods of Anosov representations (e.g. see \cite[Thm. 5.14]{GW}).} \end{rmks}

\begin{appendix}\section{Nearby conjugates of some Anosov representations}
In this appendix, we prove a technical lemma for nearby conjugates of the diagonal embeddings $\rho_{p,r}$ and $\psi_{p,r}$ defined in Theorem \ref{stable1}. We also prove a lemma that we use to verify the density of Zariski dense representations in the open set $\Omega$ in Theorem \ref{mainthm}.

Let $\mathbb{K}=\mathbb{R}$ or $\mathbb{C}$. Denote by $\mathcal{E}_{ij}$ the $d\times d$-matrix whose $(i,j)$ entry is equal to $1$ and the rest are zero. 

\begin{lemma}\label{embedding-1} Let $\Delta<\mathsf{SL}_{d}(\mathbb{K})$ be an irreducible $1$-Anosov subgroup, $p\in \mathbb{N}$, $r\in \mathbb{Z}_{\geq 0}$ and $\rho_{p,r}:\Delta \rightarrow \mathsf{SL}_{pd+r}(\mathbb{K})$, $\rho_{p}:\Delta \rightarrow \mathsf{SL}_{pd}(\mathbb{K})$ the representations defined as follows: $$ \rho_{p,r}(\gamma):=
\begin{pmatrix}
I_r & \\ 
 & \rho_p(\gamma)
\end{pmatrix}, \ \rho_p(\gamma):=\textup{diag}\big(\underbrace{\gamma,\ldots, \gamma}_{\textup{p-times}}\big) \ \ \gamma \in \Delta.$$ Let $\epsilon>0$. There exists an open neighbourhood $U_{\epsilon}\subset \textup{Hom}(\Delta,\mathsf{SL}_{pd+r}(\mathbb{K}))$ of $\rho_{p,r}$ with the property: if $\chi \in U_{\epsilon}$ is conjugate to $\rho_{p,r}$, there is $g\in \mathsf{GL}_{pd+r}(\mathbb{K})$ such that $||g-I_{pd+r}||<\epsilon$ and $$\chi(\gamma)=g\rho_{p,r}(\gamma)g_i^{-1}, \ \gamma \in \Delta.$$\end{lemma}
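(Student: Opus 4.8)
The plan is to deduce the lemma from the fact that, because $\Delta$ is irreducible, $\rho_{p,r}$ is a \emph{semisimple} representation, whose conjugation orbit is therefore closed; the statement is then a soft topological consequence of the orbit map being an embedding.

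\textbf{Setup.} Put $q:=pd+r$. Since $\Delta\hookrightarrow\mathsf{SL}_d(\mathbb{K})$ is irreducible, $\rho_{p,r}$ is the direct sum of $r$ copies of the trivial character of $\Delta$ and $p$ copies of the irreducible inclusion, hence completely reducible. Moreover $\Delta$ is finitely generated, being $1$-Anosov and hence word-hyperbolic. Fix a finite generating set $S$ of $\Delta$ and regard $\textup{Hom}(\Delta,\mathsf{SL}_q(\mathbb{K}))$ as the real algebraic subset of $\mathsf{SL}_q(\mathbb{K})^{|S|}$ cut out by the relations of $\Delta$, on which $\mathsf{GL}_q(\mathbb{K})$ acts by conjugation, the scalars acting trivially. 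Write $O:=\{g\rho_{p,r}g^{-1}:g\in\mathsf{GL}_q(\mathbb{K})\}$ for the orbit of $\rho_{p,r}$ and $\mathcal{Z}(\rho_{p,r})<\mathsf{GL}_q(\mathbb{K})$ for its stabilizer, the centralizer of its image, a closed subgroup.

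\textbf{Key input and consequence.} The crucial point is that $O$ is closed in $\textup{Hom}(\Delta,\mathsf{SL}_q(\mathbb{K}))$: this is the classical characterization of closed orbits for the conjugation action of a reductive group, namely that the orbit of $\chi$ is closed if and only if $\chi$ is semisimple, valid over $\mathbb{C}$ (Richardson) and over $\mathbb{R}$ in the framework of real reductive groups and minimal vectors (Richardson--Slodowy, Bremigan). Granting this, the orbit map $\mathsf{GL}_q(\mathbb{K})/\mathcal{Z}(\rho_{p,r})\to\textup{Hom}(\Delta,\mathsf{SL}_q(\mathbb{K}))$ is an injective immersion with image $O$, and since an orbit of a Lie group action is an embedded submanifold precisely when it is locally closed, and a closed set is locally closed, this map is a homeomorphism of $\mathsf{GL}_q(\mathbb{K})/\mathcal{Z}(\rho_{p,r})$ onto $O$ equipped with the subspace topology. (For orientation: the tangent space of $O$ at $\rho_{p,r}$ is the space of coboundaries of $\textup{Ad}\circ\rho_{p,r}$, with kernel the Lie algebra of $\mathcal{Z}(\rho_{p,r})$, and a transverse slice through $\rho_{p,r}$ is obtained by exponentiating a linear complement of that kernel in $\mathfrak{gl}_q(\mathbb{K})$ via the inverse function theorem; only the topological statement is needed below.)

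\textbf{Conclusion.} Fix $\epsilon>0$ and set $V_\epsilon:=\{g\in\mathsf{GL}_q(\mathbb{K}):||g-I_q||<\epsilon\}$, an open neighbourhood of $I_q$. Since $\mathsf{GL}_q(\mathbb{K})\to\mathsf{GL}_q(\mathbb{K})/\mathcal{Z}(\rho_{p,r})$ is an open map, the image of $V_\epsilon$ is open in the quotient, so, transported through the homeomorphism above, the set $\{g\rho_{p,r}g^{-1}:g\in V_\epsilon\}$ is open in $O$ for the subspace topology; write it as $U_\epsilon\cap O$ with $U_\epsilon\subset\textup{Hom}(\Delta,\mathsf{SL}_q(\mathbb{K}))$ open. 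Taking $g=I_q$ shows $\rho_{p,r}\in U_\epsilon$, and if $\chi\in U_\epsilon$ is conjugate to $\rho_{p,r}$ then $\chi\in U_\epsilon\cap O=\{g\rho_{p,r}g^{-1}:g\in V_\epsilon\}$, i.e. $\chi=g\rho_{p,r}g^{-1}$ for some $g\in\mathsf{GL}_q(\mathbb{K})$ with $||g-I_q||<\epsilon$, which is the assertion.

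\textbf{Main obstacle.} The only non-formal ingredient is the closedness of $O$, and the point to handle with care is the real case $\mathbb{K}=\mathbb{R}$, where one invokes that $\mathsf{GL}_q(\mathbb{R})$ is a real reductive group and that a semisimple point is a polystable (minimal) vector, hence has closed orbit. A variant avoiding semisimplicity altogether uses that $O$, being an orbit of the algebraic action of $\mathsf{PGL}_q$ on the affine variety $\textup{Hom}(\Delta,\mathsf{SL}_q(\mathbb{K}))$, is automatically Zariski-locally-closed and therefore locally closed for the analytic topology, which is all the final step needs; I would present the semisimplicity route as primary, since it is the most directly available here, and note this variant.
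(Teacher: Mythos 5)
Your proof is correct, but it takes a genuinely different and more abstract route than the paper's. The paper argues by hand: it proves directly (``Claim 1'' in its proof) that for any sequence $(g_n)$ with $g_n\rho_p g_n^{-1}\to\rho_p$ one has $g_n\mathcal{Z}(\rho_p)\to\mathcal{Z}(\rho_p)$, proceeding by induction on $p$, decomposing $g_n$ in block form, using the Anosov limit maps (Cartan property) to normalize the first column of blocks, invoking Burnside's theorem (the $\mathbb{K}$-span of an irreducible proximal subgroup is all of $\mathsf{Mat}_{d\times d}(\mathbb{K})$) to control the off-diagonal blocks, and finally handling the $r\geq 1$ case by a further block reduction. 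You instead observe that the entire content of the lemma is the statement that the conjugation orbit map $\mathsf{GL}_q(\mathbb{K})/\mathcal{Z}(\rho_{p,r})\to O$ is a homeomorphism onto its image, which follows once $O$ is known to be locally closed in $\textup{Hom}(\Delta,\mathsf{SL}_q(\mathbb{K}))$; pushing the $\epsilon$-ball around $I_q$ through this homeomorphism then gives the open set $U_\epsilon$. Your primary justification of closedness via semisimplicity and real GIT (Richardson--Slodowy, Birkes) is sound, but, as you note at the end, it is more than necessary: local closedness of orbits of algebraic group actions (Chevalley over $\mathbb{C}$, together with openness and finiteness of real orbits inside the real points of a complex orbit over $\mathbb{R}$) already suffices and is automatic, so the irreducibility hypothesis on $\Delta$ is only used to identify $\mathcal{Z}(\rho_{p,r})$ and is not needed for the topological conclusion. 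Your argument is shorter and strictly more general (it applies to any linear representation of a finitely generated group); the trade-off is that it outsources the key step to Effros-type orbit theory and algebraic-group facts, whereas the paper's computation is self-contained within the elementary material of its Section~2 and gives explicit convergence of the conjugating matrices, which is in keeping with the quantitative flavour of the rest of the paper.
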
 

\begin{proof} We first prove the lemma for $r=0$ and the representation $\rho_p:=\rho_{p,0}$.

Let $\xi^1:\partial_{\infty}\Delta\rightarrow \mathbb{P}(\mathbb{K}^d)$ be the Anosov limit map of $\Delta<\mathsf{SL}_{d}(\mathbb{K})$. Since $\Delta$ is irreducible, the centralizer $\mathcal{Z}(\rho_{p})$ of $\rho_{p}(\Delta)<\mathsf{GL}_{pd}(\mathbb{K})$ is the group  $\mathsf{GL}_{pd}(\mathbb{K})\cap \{(\lambda_{ij}I_{d})_{i,j=1}^{p}: \lambda_{ij}\in \mathbb{K}\}$. It suffices to prove:
\medskip

\noindent {\em Claim 1.} {\em For every sequence $(g_n)_{n\in \mathbb{N}}$ with $\lim_n g_n \rho_p g_n^{-1}=\rho_p$, $\lim_{n} g_n \mathcal{Z}(\rho_p)=\mathcal{Z}(\rho_p).$}
\medskip

 We use induction on $p\in \mathbb{N}$. Suppose $p=1$ and observe that $\lim_{n}g_n \xi^1=\xi^1$. Up to passing to a subsequence, we may assume $\lim_n \frac{g_n}{||g_n||}=g_{\infty}$. In particular, there is an open subset $U$ of $\partial_{\infty}\Delta$ such that $g_{\infty}v_{\eta}\neq 0$, for $\eta \in U$ and $\xi^1(\eta)=[v_{\eta}]$ . In particular, $g_{\infty}$ is invertible and $g_{\infty}\xi^1(\eta)=\xi^1(\eta)$ for every $\eta\in \partial_{\infty}\Delta$. This forces $g_{\infty}$ to be a unit scalar multiple of $I_d$. The claim follows for $p=1$.

Suppose that $p\geq 2$ and the claim holds for $p-1$. Note that $\rho_p$ is \hbox{$p$-Anosov with $p$-limit map} \begin{align}\label{p-map}\xi^p(\eta):=\textup{span}\big((v_{\eta},0,\ldots,0)^t,(0,v_{\eta},\ldots,0)^t,\ldots, (0,\ldots,0,v_{\eta})^t\big) \ \ \xi^{1}(\eta)=[v_{\eta}], \eta\in \partial_{\infty}\Delta.\end{align} In addition, observe that since $\Delta$ is irreducible, every $d$-dimensional $\mathbb{K}$-subspace of $\mathbb{K}^{pd}$, invariant under $\rho_p(\Delta)$, is of the form $$V_{r_1,\ldots, r_p}:=\big\{(r_1v,\ldots, r_pv)^t:v\in \mathbb{K}^{d}\big\}, \ r_1,\ldots, r_p \in \mathbb{K}.$$ Up to passing to a subsequence, we may assume that $V:=\lim_n g_nV_{1,0,\ldots,0}$ exists. Since $\lim_n g_n\rho_pg_n^{-1}=\rho_p$, $V$ is $\rho_p(\Delta)$-invariant and choose $g_0\in \mathcal{Z}(\rho_p)$ such that $g_0V=V_{1,0,\ldots,0}$. Let $g_n':=g_{0}g_n$ and note $\lim_n g_n'\rho_p (g_n')^{-1}=\rho_p$, $\lim_n g_n'\xi_{\rho_p}^p=\xi_{\rho}^p$. Writing $\xi^1(\eta)=[v_{\eta}]$, since $\xi^p(\eta)\cap V_{1,0,\ldots,0}=\textup{span}( (v_{\eta},0,\ldots,0)^t)$, we check that $\lim_n [g_n'(v_{\eta},0\ldots,0)^t]$ exists and \begin{align}\label{convergence-1} \lim_{n\rightarrow \infty} g_n'[(v_{\eta},0,\ldots 0)^t]=[(v_{\eta},0,\ldots,0)^t], \ \eta\in \partial_{\infty}\Delta.\end{align} Now let us write $$g_n':=\begin{pmatrix}
A_n & B_n \\ 
C_n & D_n
\end{pmatrix}w_n$$ for some $w_n\in \mathcal{Z}(\rho_p), A_n\in \mathsf{Mat}_{d\times d}(\mathbb{K}), ||A_n||=1$ and $D_n\in \mathsf{Mat}_{s\times s}(\mathbb{K})$, $s:=(p-1)d$. For $\eta\in \partial_{\infty}\Delta$,  (\ref{convergence-1}) implies that \begin{align} \label{convergence-2} \lim_{n\rightarrow \infty}[A_nv_{\eta}]=[v_{\eta}], \ \lim_{n\rightarrow \infty}\frac{||C_nv_{\eta}||}{||(A_nv_{\eta},C_nv_{\eta})^t||}=0.\end{align} In particular, using the same argument as in the case $p=1$, we obtain a sequence $(\lambda_n)_{n}$, $|\lambda_n|=1$, with $\lim_n \lambda_n A_n=I_d$. Then, (\ref{convergence-2}) implies $\lim_nC_nv_{\eta}=0$ for every $\eta$ and $\lim_nC_n=0_{s\times d}$. Thus, since $\lambda_nA_n$ is eventually invertible, we may write $$g_n':=\begin{pmatrix}
\lambda_n A_n &  \\ 
\lambda_n C_n & I_{s}
\end{pmatrix}g_n''w_n', \ g_n'':=\begin{pmatrix}
I_d & F_n \\ 
 & D_n
\end{pmatrix},$$ for some $w_n'\in \mathcal{Z}(\rho_p)$. It is clear that since $w_n'\in \mathcal{Z}(\rho_p)$ we have $\lim_n g_n''\rho_p(g_n'')^{-1}=\rho_p$. In particular, $\lim_n D_n \rho_{p-1} D_{n}^{-1}=\rho_{p-1}$ and by the inductive step we may choose $(c_n)_{n\in \mathbb{N}}\subset \mathcal{Z}(\rho_{p-1})$ such that $\lim_{n}D_nc_n=I_{s}, s=(p-1)d$.  Thus we may write: $$g_n':=\begin{pmatrix}
\lambda_n A_n &  \\ 
\lambda_n C_n & I_{s}
\end{pmatrix}g_n'''w_n'', \ g_n''':=\begin{pmatrix}
I_d & F_n \\ 
 & I_{s}
\end{pmatrix}, \  w_n''= \begin{pmatrix}
I_d & \\ 
 & c_n
\end{pmatrix}w_n'\in \mathcal{Z}(\rho_p)$$  where $F_n\in \mathsf{Mat}_{d\times s}(\mathbb{K})$ is in block form $$F_n:=(F_{n,1} \ldots, F_{n,p-1}),\  F_{n,i}\in \mathsf{Mat}_{d\times d}(\mathbb{K}).$$ Since $w_n''\in \mathcal{Z}(\rho_p)$, $\lim_{n} g_n'''\rho_p (g_n''')^{-1}=\rho_p$ and $\lim_{n} (F_{n,i}\gamma-\gamma F_{n,i})=0_{d\times d}$ for every $\gamma \in \Delta$ and $i=1,\ldots,p-1$. Observe that since $\Delta$ is irreducible and contains a $1$-proximal element, by Burnside's theorem (see the proof in \cite{Burnsidethm}), $\Delta$ linearly spans $\mathsf{Mat}_{d\times d}(\mathbb{K})$, hence \begin{align}\label{convergence-3} \lim_{n\rightarrow \infty} (F_{n,i}X-XF_{n,i})=0_{d\times d} \ \ \forall X\in \mathsf{Mat}_{d\times d}(\mathbb{K}), \ i=1,\ldots, p.\end{align} By applying (\ref{convergence-3}) for the elementary matrix $X:=\mathcal{E}_{mj}$, we obtain $$\lim_{n \rightarrow \infty} (a_{11}(F_{n,i})-a_{jj}(F_{n,i}))=0, \  \lim_{n\rightarrow \infty} a_{mj}(F_{n,i})=0$$ for $m\neq j, i=1,\ldots, p-1$ and $a_{mj}(F_{n,i})$ denotes the $(i,j)$-entry of $F_{n,i}$. This shows that for every $i$, we have $\lim_{n}(F_{n,i}-\mu_{i,n}I_{d})=0_{d\times d}$, where $\mu_{i,n}:=a_{11}(F_{n,i})$. Hence, $\lim_{n} g_{n}'''b_n=I_{pd}$, where $$b_{n}:=\begin{pmatrix}
I_d & T_n \\ 
 & I_{s}
\end{pmatrix}\in \mathcal{Z}(\rho_p), \ T_n:=\big(-\mu_{1,n}I_d, \ldots, -\mu_{p-1,n}I_d\big).$$ In conclusion, for large $n\in \mathbb{N}$, $$g_0g_n=g_n'=\begin{pmatrix}
\lambda_n A_n &  \\ 
\lambda_n C_n & I_{s}
\end{pmatrix}(g_n'''b_n)b_{n}^{-1}w_n'', \ \ w_n'',b_n\in \mathcal{Z}(\rho_p),$$  $\lim_n\lambda_n A_n=I_d$, $|\lambda_n|=1$, $\lim_n C_n=0_{s\times d}$, $\lim_n g_n'''b_n=I_d$ and hence $$\lim_{n\rightarrow \infty}g_0g_n(w_n'')^{-1}b_n=I_{d}.$$ Finally, since $g_0\in \mathcal{Z}(\rho_p)$, $\lim_n g_n\mathcal{Z}(\rho_p)=\mathcal{Z}(\rho_p)$. This finishes the proof of the induction and the claim for $r=0$.
\medskip

\noindent {\em Proof of Lemma \ref{embedding-1} for $r\geq 1$.} Let $q:=pd+r$, and consider the decomposition $\mathbb{K}^q=W_1\oplus W_2$, $W_1=\mathbb{K}^r, W_2=\mathbb{K}^{d}\oplus \cdots \oplus \mathbb{K}^d$, with respect to which $\rho_{p,r}$ preserves and acts trivially on $W_1$ and restricts to $\rho_p$ on $W_2$. By the irreducubility of $\Delta$, the centralizer of $\mathcal{Z}(\rho_{p,r})$ in $\mathsf{GL}(W_1\oplus W_2)$ is $\mathsf{GL}(W_1)\times \mathcal{Z}(\rho_{p})$. Suppose $(h_n)_{n\in \mathbb{N}}$ is an arbitrary sequence with $\lim_{n}h_n\rho_{p,r}h_n^{-1}=\rho_{p,r}$. We need to verify that $\lim_n h_n \mathcal{Z}(\rho_{p,r})=\mathcal{Z}(\rho_{p,r})$.

 Note that $\rho_{p,r}$ is $p$-Anosov with $(q-p)$-limit map, $\xi^{q-p}(\eta):=W_1 \oplus \xi^{d-1}(\eta)\oplus \cdots \oplus \xi^{d-1}(\eta)$, $\eta\in \partial_{\infty}\Delta$, where $\xi^{d-1}$ is the $(d-1)$-limit map of $\Delta$. Since $\lim_n h_n \xi^{q-p}=\xi^{q-p}$ and $\bigcap_{\eta}\xi^{q-p}(\eta)=W_1$, we have $\lim_nh_nW_1=W_1$. Writing $h_n$ in block-form, eventually, the top left $r\times r$-block of $h_n$ is invertible (otherwise, there would exist $(v_n)_{n\in \mathbb{N}}\subset W_1$ non-zero such that $h_nv_n\in W_2$ for every $n$). Up to right mutliplication by an element of $\mathcal{Z}(\rho_{p,r})$, we may write $$h_n:=\begin{pmatrix}
I_r & Z_n \\ 
 L_n& J_n
\end{pmatrix}, \ J_n\in \mathsf{Mat}_{pd\times pd}(\mathbb{K}).$$ Since $\lim_n h_nW_1=W_1$, this forces $\lim_n L_n=0_{pd\times r}$ and we may write: $$h_n\mathcal{Z}(\rho_{p,r})=\begin{pmatrix}
I_r &  \\ 
 L_n& I_{pd}
\end{pmatrix}\begin{pmatrix}
I_r & Z_n \\ 
 & J_n'
\end{pmatrix}\mathcal{Z}(\rho_{p,r}).$$ Notice that $\lim_n J_n'\rho_p(J_n')^{-1}=\rho_p$, thus, by Claim 1, there is a sequence $(\omega_n)_{n\in \mathbb{N}}\subset \mathcal{Z}(\rho_{p})$ such that $\lim_n J_n'\omega_n=I_{pd}$. In particular we may write: $$h_n\mathcal{Z}(\rho_{p,r})=\begin{pmatrix}
I_r &  \\ 
 L_n& J_n'\omega_n
\end{pmatrix}\begin{pmatrix}
I_r & Z_n '\\ 
& I_{pd}
\end{pmatrix}\mathcal{Z}(\rho_{p,r})$$ and $\lim_n Z_n'(\rho_{p}(\gamma)-I_{pd})=0_{r\times pd}.$ Writing $Z_n'$ in $(d\times d)$-blocks, if $Z_{n,1}',\ldots, Z_{n,pr}'\in \mathsf{Mat}_{d\times d}(\mathbb{K})$ are its blocks, we see that $\lim_n Z_{n,i}'(\gamma-I_d)=0_{d\times d}$ and thus $\lim_n Z_{n,i}'=0_{d\times d}$. Hence, for every sequence $(h_n)_{n\in \mathbb{N}}$ with $\lim_n h_r \rho_{r,p}h_n^{-1}=\rho_{r,p}$ we have $\lim_n h_n\mathcal{Z}(\rho_{p,r})=\mathcal{Z}(\rho_{p,r})$ and the proof is complete.\end{proof}

\subsection{Zariski dense deformations} For a $\{1,q-1\}$-proximal matrix $w\in \mathsf{GL}_q(\mathbb{K})$ denote by $x_w^{\pm}$ (resp. $V_w^{\pm}$) the attracting fixed point of $w^{\pm 1}$ in $\mathbb{P}(\mathbb{K}^q)$ and $\mathsf{Gr}_{q-1}(\mathbb{K}^q)$ respectvely. We will use the following lemma to exhibit Zariski dense examples in Theorem \ref{mainthm} .

\begin{lemma} \label{Z-dense} Fix $q\in \mathbb{N}_{\geq 2}$ and $(v_0^{\pm},V_0^{\pm})\in \mathcal{F}_{1,q-1}(\mathbb{K}^q)$ two transverse flags. Let $\mathcal{O}_1,\ldots, \mathcal{O}_{s}$, $s:=2q^2-2$, be open subsets of $\mathsf{GL}_{q}(\mathbb{K})$. There exist $f_1\in \mathcal{O}_1, \ldots, f_{s}\in \mathcal{O}_s$, with the property: for every $\{1,q-1\}$-proximal matrix $w\in \mathsf{GL}_q(\mathbb{K})$ with $x_w^{\pm}=v_0^{\pm},V_w^{\pm}=V_0^{\pm}$, $\langle f_1wf_1^{-1},\ldots, f_{s}wf_{s}^{-1}\rangle$ is a Zariski dense subgroup of $\mathsf{GL}_{q}(\mathbb{K})$.\end{lemma}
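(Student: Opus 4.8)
The plan is to translate the statement into a genericity assertion about $s$-tuples and settle it by a dimension count, the crucial point being that the transversality hypothesis confines $w$ to a low-dimensional subgroup, so that $s=2q^2-2$ conjugating parameters suffice. First I would normalise coordinates. Assume there is an eligible $w$ (otherwise the statement is vacuous). Using transversality of $(v_0^+,V_0^+)$ and $(v_0^-,V_0^-)$, together with the fact that for such $w$ the lines $v_0^\pm$ are the (one-dimensional) top and bottom eigenlines of $w$ and $V_0^+,V_0^-$ are the spans of all generalised eigenspaces of $w$ except the bottom one, resp. except the top one, one obtains a direct sum $\mathbb{K}^q=v_0^+\oplus U_0\oplus v_0^-$ with $U_0:=V_0^+\cap V_0^-$ of dimension $q-2$, and every eligible $w$ preserves each of these three factors. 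Hence every eligible $w$ lies in the block subgroup $B:=\mathsf{GL}(v_0^+)\times\mathsf{GL}(U_0)\times\mathsf{GL}(v_0^-)\cong\mathsf{GL}_1(\mathbb{K})\times\mathsf{GL}_{q-2}(\mathbb{K})\times\mathsf{GL}_1(\mathbb{K})$, of dimension $\dim_{\mathbb{K}}B=(q-2)^2+2=q^2-4q+6$; being proximal, $w$ is not scalar, so $w$ lies in the Zariski-open subset $B^{\ast}:=B\smallsetminus Z(\mathsf{GL}_q(\mathbb{K}))$.

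Next I would reduce to finitely many subgroup types. It suffices to find $(f_1,\dots,f_s)$ in $\mathcal{O}_1\times\cdots\times\mathcal{O}_s$ such that for every eligible $w$ the Zariski closure $H_w$ of $\langle f_1wf_1^{-1},\dots,f_swf_s^{-1}\rangle$ contains $\mathsf{SL}_q(\mathbb{K})$ (this is what the application needs, and the claimed Zariski density in $\mathsf{GL}_q(\mathbb{K})$ then follows once $\det w$ is non-torsion; over $\mathbb{R}$ one argues over the complexification). If $H_w$ does not contain $\mathsf{SL}_q(\mathbb{K})$, then $\langle f_iwf_i^{-1}\rangle$ lies in a conjugate $gM_jg^{-1}$ of one of a finite list $M_1,\dots,M_N$ of representatives of the conjugacy classes of maximal proper closed subgroups of $\mathsf{GL}_q(\mathbb{K})$ not containing $\mathsf{SL}_q(\mathbb{K})$ -- the maximal parabolics together with the finitely many maximal reductive ones (normalisers of orthogonal and symplectic groups, of tensor and imprimitive decompositions, and of the finitely many $\mathcal{S}$-subgroups) -- where one invokes the classification of maximal subgroups of $\mathsf{GL}_q$. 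Therefore the set $\mathrm{BAD}\subset\mathsf{GL}_q(\mathbb{K})^s$ of tuples that fail for some eligible $w$ is contained in $\bigcup_{j=1}^{N}\widetilde{\mathrm{Bad}}_j$, where $\widetilde{\mathrm{Bad}}_j$ is the image of the variety $Y_j:=\{(g,w,h_1,\dots,h_s):g\in\mathsf{GL}_q(\mathbb{K}),\ w\in B^{\ast},\ h_iwh_i^{-1}\in M_j\text{ for all }i\}$ under $(g,w,h_1,\dots,h_s)\mapsto(gh_1,\dots,gh_s)$, hence is constructible.

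The core is then the dimension count. Fibering $Y_j$ over $(g,w)\in\mathsf{GL}_q(\mathbb{K})\times B^{\ast}$ gives $\dim\widetilde{\mathrm{Bad}}_j\le\dim Y_j\le q^2+\dim B+s\,\delta_j$, where $\delta_j:=\max_{w\in B^{\ast}}\dim\{h\in\mathsf{GL}_q(\mathbb{K}):hwh^{-1}\in M_j\}$. The key estimate is $\delta_j\le q^2-1$: the set $X:=\{h:hwh^{-1}\in M_j\}$ is Zariski-closed in the irreducible variety $\mathsf{GL}_q(\mathbb{K})$, so $\dim X=q^2$ would force $X=\mathsf{GL}_q(\mathbb{K})$, i.e. the whole conjugacy class of $w$ would lie in $M_j$; since $w\in B^{\ast}$ is non-central, its normal closure -- hence $\mathsf{SL}_q(\mathbb{K})$, because a non-central normal subgroup of $\mathsf{GL}_q(\mathbb{K})$ contains $\mathsf{SL}_q(\mathbb{K})$ -- would lie in $M_j$, a contradiction. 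Plugging in, $\dim\widetilde{\mathrm{Bad}}_j\le q^2+(q^2-4q+6)+s(q^2-1)$, which is $<sq^2=\dim\mathsf{GL}_q(\mathbb{K})^s$ precisely when $q^2+\dim B<s$, i.e. when $2q^2-4q+6<2q^2-2$, i.e. for $q\ge3$; this is exactly why $s=2q^2-2$ works. Thus each $\overline{\widetilde{\mathrm{Bad}}_j}$ is a proper closed subvariety, $\mathrm{BAD}$ is nowhere dense in $\mathsf{GL}_q(\mathbb{K})^s$, and any $(f_1,\dots,f_s)$ lying in the nonempty open set $\mathcal{O}_1\times\cdots\times\mathcal{O}_s$ outside it has the required property. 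The remaining case $q=2$ is handled directly: for $s\ge2$ and generic $f_i$ the lines $f_iv_0^{\pm}$ are pairwise distinct, so $\langle f_iwf_i^{-1}\rangle$ has no invariant line and acts irreducibly, and being generated by proximal elements it cannot be contained in a conjugate of the normaliser of a maximal torus (proximal elements lie in the torus part, which would make the group abelian, against generic choice), so its Zariski closure contains $\mathsf{SL}_2(\mathbb{K})$.

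The main obstacle is that a single tuple $(f_i)$ must work simultaneously for the whole continuum of eligible matrices $w$; this is precisely what forces the packaging of $\mathrm{BAD}$ as the constructible set $\bigcup_j\widetilde{\mathrm{Bad}}_j$ through the morphism above, and that packaging only wins the dimension count because transversality confines $w$ to $B^{\ast}$, whose dimension $q^2-4q+6$ is strictly less than $s-q^2$. The auxiliary inputs -- finiteness of the conjugacy classes of maximal subgroups of $\mathsf{GL}_q$, and the fact that non-central normal subgroups of $\mathsf{GL}_q(\mathbb{K})$ contain $\mathsf{SL}_q(\mathbb{K})$ -- are classical.
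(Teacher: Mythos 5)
Your argument is correct in spirit but takes a genuinely different route from the paper's. The paper first applies the adjoint representation $\textup{Ad}:\mathsf{GL}_q(\mathbb{K})\rightarrow \mathsf{GL}(\mathfrak{sl}_q)\cong \mathsf{GL}_{q^2-1}(\mathbb{K})$, observing that a $\{1,q-1\}$-proximal $w$ with prescribed attracting/repelling flags has $\textup{Ad}(w)$ $1$-proximal with attracting line and repelling hyperplane in $\mathbb{P}(\mathfrak{sl}_q)$ depending only on $(v_0^{\pm},V_0^{\pm})$; it then reduces the claim to showing that $\langle \textup{Ad}(f_iwf_i^{-1})\rangle$ is irreducible on $\mathfrak{sl}_q$, and proves this by a purely linear-algebraic genericity argument (Lemma~\ref{Z-dense2}): choose $f_1,\dots,f_{2(q^2-1)}$ so that every $(q^2-1)$-subset of the translated attracting lines spans and every $(q^2-1)$-subset of the translated repelling hyperplanes intersects trivially. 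That argument is elementary and self-contained and explains the value $s=2q^2-2$ directly as $2\dim\mathfrak{sl}_q$. Your proof instead runs a dimension count on the ``bad'' tuples, which is conceptually appealing (and, as you note, would in principle allow a slightly smaller $s$), but it buys this at the price of a heavy external input: you invoke, in effect, two nontrivial facts about $\mathsf{GL}_q(\overline{\mathbb{K}})$, namely (a) that there are only finitely many conjugacy classes of maximal proper closed algebraic subgroups not containing $\mathsf{SL}_q$, and (b) that every proper closed subgroup is contained in some maximal one. Point (b) is not automatic -- ascending chains of closed subgroups of $\mathsf{GL}_q$ need not stabilise (already $\mathsf{GL}_1(\mathbb{C})$ has \emph{no} maximal proper closed subgroup) -- and point (a), once you include finite maximal subgroups, is a substantial classification theorem. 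Both can be repaired here (the Zariski closure $R$ of $\langle f_iwf_i^{-1}\rangle$ is infinite because $\textup{Ad}(w)$ is proximal, so $R^{\circ}$ is nontrivial, and one can then parametrize by $\textup{Lie}(R^{\circ})$ or argue that $R\subset N_{\mathsf{GL}_q}(R^{\circ})$, a proper subgroup since $R^{\circ}$ is neither central nor contains $\mathsf{SL}_q$), but as written your appeal to ``a finite list $M_1,\dots,M_N$ of representatives of the conjugacy classes of maximal proper closed subgroups'' glosses over this. You correctly spotted, incidentally, that the lemma as stated should conclude Zariski density in $\mathsf{SL}_q(\mathbb{K})$ rather than literally in $\mathsf{GL}_q(\mathbb{K})$ (the determinants of the generators are all $\det w$), which is what the paper's proof and its application to Theorem~\ref{mainthm} actually establish and use.
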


For the proof of the previous lemma we need the following observation.

\begin{lemma} \label{Z-dense2} Let $G$ be a strongly irreducible subgroup of $\mathsf{GL}_n(\mathbb{K})$. Fix $(\omega_0\,V_0) \in \mathcal{F}_{1,n-1}(\mathbb{K}^n)$ and $\mathcal{O}_1,\ldots, \mathcal{O}_{2n}\subset G$ Zariski dense subsets. Then there exist $f_1\in \mathcal{O}_{1},\ldots, f_{2n}\in \mathcal{O}_{2n}$ with the property: for every $1$-proximal matrix $g \in G$ with $x_g^{+}=[\omega_0]$ and $V_g^{-}=V_0$, the group $\langle f_1 gf_1^{-1},\ldots, f_{2n}gf_{2n}^{-1}\rangle$ is irreducible.\end{lemma}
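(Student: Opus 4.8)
The plan is to reduce irreducibility of $\langle f_1gf_1^{-1},\dots,f_{2n}gf_{2n}^{-1}\rangle$ to a general position condition on the configuration of points $f_i[\omega_0]$ and hyperplanes $f_iV_0$ that does \emph{not} involve $g$, and then to produce such $f_i$ inside the $\mathcal{O}_i$ by a Zariski density argument. First I would record the following elementary fact about a $1$-proximal $g\in \mathsf{GL}_n(\mathbb{K})$ with $x_g^{+}=[\omega_0]$ and $V_g^{-}=V_0$: since the dominant eigenvalue $\lambda$ of $g$ is real (a complex one would share its modulus with its conjugate) and its generalized eigenspace is the line $\mathbb{K}\omega_0$, the minimal polynomial has $(x-\lambda)$ to the first power, so $\mathbb{K}^n=\mathbb{K}\omega_0\oplus V_0$ is the associated primary decomposition. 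By coprimality (partial fractions), every $g$-invariant subspace $U$ splits as $U=(U\cap \mathbb{K}\omega_0)\oplus(U\cap V_0)$; since $\mathbb{K}\omega_0$ is a line, this gives the dichotomy: every $g$-invariant subspace either contains $\omega_0$ or is contained in $V_0$.

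Now suppose $W\subsetneq \mathbb{K}^n$ is a nonzero subspace invariant under all $f_igf_i^{-1}$, and set $k=\dim W$, so $1\le k\le n-1$. Then each $f_i^{-1}W$ is $g$-invariant, hence either $f_i\omega_0\in W$ (call $i$ of \emph{type $A$}) or $W\subseteq f_iV_0$ (type $B$), and every $i\in\{1,\dots,2n\}$ is of type $A$ or $B$. I would then choose the $f_i$ so that the vectors $f_1\omega_0,\dots,f_{2n}\omega_0$ are in general position (every $n$ of them form a basis of $\mathbb{K}^n$) and, dually, the hyperplanes $f_1V_0,\dots,f_{2n}V_0$ are in general position (every $n$ of them intersect in $\{0\}$). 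With these conditions: if $k+1$ indices were of type $A$ then some $k+1$ linearly independent vectors $f_i\omega_0$ would lie in the $k$-dimensional space $W$, a contradiction; and if $n-k+1$ indices were of type $B$ then $W$ would sit inside an intersection of $n-k+1$ of the $f_iV_0$, which has dimension $\le k-1<k$, again a contradiction. Hence type $A$ occurs for at most $k$ indices and type $B$ for at most $n-k$, so
\[ 2n \;=\; |A\cup B| \;\le\; |A|+|B| \;\le\; k+(n-k) \;=\; n, \]
which is absurd. Therefore no such $W$ exists and $\langle f_1gf_1^{-1},\dots,f_{2n}gf_{2n}^{-1}\rangle$ is irreducible for \emph{every} admissible $g$, since the two general position conditions depend only on $(\omega_0,V_0)$.

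It remains to pick $f_i\in\mathcal{O}_i$ realizing these conditions, which I would do inductively. Given $f_1,\dots,f_{j-1}$ chosen so that every subfamily of size $\le n$ of $\{f_i\omega_0\}$ is linearly independent and, dually, every subfamily of size $\le n$ of $\{f_iV_0\}$ is in general position, the bad choices of $f_j$ are those with $f_j\omega_0\in \textup{span}\{f_i\omega_0:i\in S\}$ for some $S\subseteq\{1,\dots,j-1\}$ of size $\le n-1$, or with $\bigcap_{i\in S}f_iV_0\subseteq f_jV_0$ for some such $S$. Each of these is a Zariski closed subset of $G$, and it is \emph{proper}: if $G\cdot\omega_0$ lay in a proper subspace $H$ then $\textup{span}(G\cdot\omega_0)$ would be a nonzero proper $G$-invariant subspace, contradicting (strong) irreducibility; dually, if a fixed line $\ell$ lay in $fV_0$ for all $f\in G$, then $\bigcap_{f\in G}fV_0$ would be a nonzero proper $G$-invariant subspace. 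Since there are finitely many $S$, the union of the bad loci is a proper Zariski closed subset of $G$; as $\mathcal{O}_j$ is Zariski dense it is not contained in it, so we may choose $f_j\in\mathcal{O}_j$ maintaining the inductive hypotheses, and after $2n$ steps we obtain the required $f_1,\dots,f_{2n}$.

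The step I expect to demand the most care is the bookkeeping in the reduction: verifying that the type-$A$/type-$B$ dichotomy is exhaustive, that the counting bounds are exactly tight at $2n$ generators, and that the general position conditions are genuinely independent of $g$; combined with the (minor but necessary) point that the primary decomposition argument works over $\mathbb{R}$ because the top eigenvalue of a $1$-proximal matrix is real. The Zariski density step and the verification that the bad loci are proper closed sets are then routine.
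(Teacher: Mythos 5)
Your proof is correct and takes essentially the same approach as the paper: both reduce irreducibility to a general-position condition on the points $f_i[\omega_0]$ and hyperplanes $f_iV_0$ (using the dichotomy that a $g$-invariant subspace either contains $\omega_0$ or lies in $V_0$) and then realize this condition inside the $\mathcal{O}_i$ by Zariski density. The only differences are cosmetic --- the paper derives the dichotomy from the dynamics of $g^n$ rather than the primary decomposition of the minimal polynomial, encodes the general-position requirement through Zariski-closed loci $X_I(\omega_0), X_I(\omega_1)$ in $G^{2n}$ chosen all at once rather than inductively, and uses the cruder count (at most $n-1$ indices with $f_i\omega_0\in W$, hence $W$ lies in at least $n+1$ of the $f_iV_0$) where your sharper $k$-versus-$(n-k)$ count would in fact get by with $n+1$ conjugates instead of $2n$.
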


\begin{proof} We write $V_0=\{v\in \mathbb{K}^n:\langle v,\omega_1\rangle=0\}$ for some $\omega_1\in \mathbb{K}^n$ and let $G^{2n}$ be the direct product of $2n$-copies of $G$. For a subset $I:=\{i_1<\cdots<i_n\}$ of $\{1,\ldots, 2n\}$, $|I|=n$, define the Zariski closed subsets of $\mathsf{GL}_{n}(\mathbb{K})\times \cdots \times \mathsf{GL}_{n}(\mathbb{K})$, \begin{align*}X_{I}(\omega_0)&:=\big\{(A_{1},\ldots,A_{2n}): \textup{det}\big(A_{i_1}\omega_0|\cdots | A_{i_n}\omega_0)=0\big\}\\ X_{I}(\omega_1)&:=\big\{(A_{1},\ldots,A_{2n}): \textup{det}(A_{i_1}^{-t}\overline{\omega_1} |\cdots | A_{i_n}^{-t}\overline{\omega_1})=0\big\}. \end{align*} Since $G$ and $G^{t}$ are strongly irreducible, for every finite-index subgroup $H<G$, $\textup{span}(H\omega_0)=\textup{span}(H^{t}\overline{\omega_1})=\mathbb{K}^n$, thus $H^{2n}$ is not contained in any of the sets $X_I(\omega_0)\cup X_I(\omega_1)$. In particular, $G^{2n}$ cannot be contained in $X:=\bigcup_{|I|=n}(X_I(\omega_0)\cup X_I(\omega_1))$. Since $\mathcal{O}_1\times \cdots \times \mathcal{O}_{2n}$ is Zariski dense in $G^{2n}$, choose $f_i\in \mathcal{O}_i$ with $(f_1,\ldots,f_{2n})\notin X$, meaning that $$\textup{span}(f_{i_1}\omega_1,\ldots, f_{i_n}\omega_1)=\mathbb{K}^n,\ f_{i_1}V_0\cap \cdots \cap f_{i_n}V_0=(0).$$ for every $n$-subset $I=\{i_1<\cdots<i_n\}$. Now suppose that $g\in G$ is a $1$-proximal element with $x_g^{+}=[\omega_0]$, $V_g^{-}=V_0$ and $W\subset \mathbb{K}^n$ a proper subspace invariant by $f_ig f_i^{-1}$ for every $i=1,\ldots, 2n$. If for some $i$, $W$ is not a subspace of $f_iV_0$, since $f_ig^nf_i^{-1}W=W$, we necessarily have $f_i\omega_0\in W$. Since $W$ is proper, there is a subset $J\subset \{1,\ldots, 2n\}$, $|J|\geq n+1$, with $W\subset f_jV_0$ for every $j\in J$ and hence $W\subset \bigcap_{j\in J}f_jV_0=(0)$. This shows that $ \langle f_1gf_1^{-1},\ldots, f_{2n}gf_{2n}^{-1} \rangle$ is irreducible.\end{proof}

\begin{proof}[Proof of Lemma \ref{Z-dense}] Let $\textup{Ad}:\mathsf{GL}_q(\mathbb{K})\rightarrow \mathsf{SL}^{\pm}_{q^2-1}(\mathbb{K})$ be the adjoint representation. If $w\in \mathsf{GL}_q(\mathbb{K})$ is a $\{1,q-1\}$-proximal element with $x_{w}^{\pm}=[v_0^{\pm}]$ and $V_{w}^{\pm}=V_0^{\pm}$, $\textup{Ad}(w)$ is $1$-proximal and its attracting fixed point and repelling hyperplane depend only on $v_0^{\pm},V_0^{\pm}$. Therefore, by Lemma \ref{Z-dense2}, there exist $f_1\in \mathcal{O}_i, \ldots, f_{s}\in \mathcal{O}_s$, $s=2q^2-2$, such that $\langle \textup{Ad}(w_1 ww_1^{-1}),\ldots, \textup{Ad}(w_{s}ww_{s}^{-1})\rangle$ is irreducible, or equivalently, that $\langle f_1 wf_1^{-1},\ldots, f_{s}wf_{s}^{-1}\rangle$ is Zariski dense in $\mathsf{SL}_q(\mathbb{K})$.\end{proof}

\end{appendix}

\bibliographystyle{siam}

\bibliography{biblio.bib}

\end{document}